\newtheorem{theorem}{Theorem}[section]
\newtheorem{proposition}{Proposition}[section]
\newtheorem{lemma}[theorem]{Lemma}
\newcommand\degree{\operatorname{deg}}
\DeclareMathOperator{\GL}{GL}
\theoremstyle{definition}
\newtheorem{definition}[theorem]{Definition}
\theoremstyle{remark}
\newcommand{\frk}[2][n]{\mathfrak{#2}}
\newcommand{\mcl}[2][n]{\mathcal{#2}}
\newcommand{\defeq}{\stackrel{\text{def}}{=}}
\newcommand\rank{\operatorname{rank}}
\newcommand{\diag}{\operatorname{diag}}
\numberwithin{equation}{section}
\begin{document}

\title[Diophantine approximation in positive characteristic]{Khintchine's theorem for affine hyperplanes in positive charateristic}

\author{Arijit Ganguly}
\address{Department of Mathematics and Statistics, IME Building
Indian Institute of Technology Kanpur
Kanpur, P.O.- IIT Kanpur, P.S.- Kalyanpur,
District - Kanpur Nagar,January 1, 1994 and, in revised form, June 22, 1994.
Pin - 208016,
Uttar Pradesh, India.}
\email{arijit.ganguly1@gmail.com}
\thanks{The author is grateful to Anish Ghosh for drawing his attention to this problem and all helpful discussions}

\subjclass{Primary 54C40, 14E20; Secondary 46E25, 20C20}
\date{\today}


\keywords{Diophantine approximation, dynamical systems}
\begin{abstract} In this paper we establish the convergence case of Khintchine's theorem for affine hyperplanes in function field of positive characteristic. Along with that, we also prove a quantitative version of the same. The main technique used in the proof is a dynamical result called `Quantitative nondivergence' due to D. Y. Kleinbock and G. A. Margulis \cite{KM}. 

\end{abstract}
\subjclass[2010]{11J83, 11K60, 37D40, 37A17, 22E40}
\maketitle
\section{Introduction} Diophantine approximation in the setting of local field with positive characteristic deals with the approximation of Laurent series in $T^{-1}$ by rational function in $T$, where $T$ is an indeterminate, and its higher dimensional analogues. This subject has drawn a considerable amount attention of mathematicians in last few decades and hence has been studied extensively. Indeed the geometry of numbers has been developed in this setting by K. Mahler in \cite{Mahler} in 1940, and consequently the Dirichlet type theorems can be established in this context (for an elementary proof of the most general and multiplicative form of Dirichlet's theorem in function fields, the reader is suggested to look at \cite{GG}). V. Sprind\v{z}uk proved the  analogue of Mahler's conjectures and some transference principles for function fields (see \cite{Spr1}). The reader is being referred to \cite{deM, Las1} for general surveys and also to \cite{AGP, Kris, KN, GR, GG2} (for a necessarily incomplete set of references) to see the state of the art of some of the recent developments.  \\

 Before we turn to the metrical aspect of the theory over positive characteristic, we would like to provide the reader with a very brief overview of the metric Diophantine approximation on manifolds. It is a branch of Number theory where one studies the extent the properties of a generic point in $\mathbb{R}^n$ with respect to Lebesgue measure or some nice measures are inherited by embedded submanifolds. Its genesis lies in the conjecture by K. Mahler in 40's that says, almost every points on the following curve, known as the \emph{Veronese curve}, \[\mathcal{V}_n\defeq \{(x,x^2, \dots, x^n):x\in \mathbb{R}\}\] are not \emph{very well approximable}, where the measure under consideration is the obvious push-forward of the Lebesgue measure on $\mathbb{R}$.  This conjecture has been settled by V. Sprind\v{z}uk later on, who in turn conjectured two more generalizations of that of Mahler. Those conjectures have remained unsolved for several decades, although some partial results have been found, until the attack by D. Y. Kleinbock and G. A. Margulis \cite{KM}. In this landmark work \cite{KM}, the authors have shown that if a point in a submanifold of $\mathbb{R}^n$ is very well approximable then the trajectory of some point, which is naturally associated to the given point in the manifold,  in the space of unimodular lattices will have a particular nature of visiting the cusp under the diagonal flow, then they use a sharp quantitative estimate on nondivergence of trajectories to show such orbits are really rare. The above mentioned `Quantitative nondivergence' estimate did undergo some further extensions and generalizations etc. in the subsequent works \cite{Kleinbock-extension}, \cite{BKM} and \cite{KT}. In fact, in \cite{BKM} Bernik, Kleinbock and Margulis used their generalized version of `Quantitative nondivergence' to prove the convergence case Khintchine's theorem in nondegenerate submanifolds of euclidean spaces. For affine hyperplanes and more generally for affince subspaces and their nondegenrate submanifolds, the same have been established by Anish Ghosh in \cite{G-hyper} and \cite{G-monat} respectively. Effective version of Khintchine theorem for nondegenerate euclidean submanifolds are provided by Adiceam et.al. \cite{Ad}, and for affinbe subspaces by Arijit Ganguly and Anish Ghosh \cite{GG-quant}. For a survey of some of the recent developments in Diophantine approximation in subspaces of $\mathbb{R}^n$ and its connection with homogeneous dynamics see \cite{G-survey}. \\
 
 Coming back to the positive characteristic theory, although there are many interesting parallels with that of over $\mathbb{R}$, it often offers surprises too. Many results 
 hold in both settings, the main results of the paper being of such kind, while there are noteworthy exceptions in place. To mention a few, the theory of badly approximable 
 numbers and vectors in positive characteristic is quite different, there is no analogue of Roth's theorem, provided that the base field is finite, and last but not the least, the Dirichlet improvable Laurent series are precisely all rational functions (see \cite[Theorem 2.4]{GG2}) unlike the scenario over real numbers where for an irrational number, Dirichlet improvability is equivalent to being badly approximable (see \cite{DS}). \\
 
The conjectures of  V. Sprind\v{z}uk in this seeting have been proved by Anish Ghosh in \cite{G-pos}. But the Khintchine theorem in this context have not been studied much to date to the best of author's knowledge. The well approximability of matrices over field of Laurent series have been first studied by Simon Kristensen in \cite{Kris1}. Indeed, the author proved a Khintchine type theorem and examined the exceptional set in terms of Hausdorff dimension. Proceeding one step further, one can ask the the analogues question for \emph{nonplanar submanifolds} in this setting, i.e., images under \emph{nonplanar} maps. This problem seems to be still open. \\
 
 The aim of this paper is to prove the analogue of Khinchine theorem for affine hyperplances in positive characteristic and furthermore provide an effective version of that as well. We mainly proceed along the lines of \cite{G-hyper, GG-quant} and make necessary adaptations of those techniques in this context. Indeed, this paper can be regarded as the positive characteristic version of \cite{G-hyper}, and to the extent of affine hyperplanes, the same can be said for \cite{GG-quant}. It is quite plausible that all the results of this paper can be generalized to any affine subspace. Indeed, one can expect a suitable analogue of the hypothesis on the higher Diophantine exponents mentioned in \cite{Kleinbock-extension, G-monat, GG-quant} should work in this case generalizing \eqref{cond} of this paper. However, we refrain ourselves from that mainly because of the above mention Diophantine condition on higher exponents is not so well understood for higher codimensional subspaces except lines. To elaborate a little more, given an affine  subspace of $\mathbb{R}^n$ with dimension strictly lying between $1$ and $n-1$, neither it is clear how to interpret that nor one knows how to verify whether the condition holds for that subspace. But for codimension $1$, it happens to be a Diophantine property of the vector  defining  the affine hyperplane. 
\subsection{The set up}
 Let $p$ be a prime and $q:= p^a$, where $a\in \mathbb{N}$, and consider the function 
field $\mathbb{F}_{q}(T)$. We define a function $|\cdot|: \mathbb{F}_{q}(T) \longrightarrow \mathbb{R}_{\geq 0}$ as follows. 
\[ |0|\defeq 0\,\,  \text{ and} \,\, \left|\frac{P}{Q}\right|\defeq q^{\displaystyle \degree P- \degree Q}
\text{ \,\,\,for all nonzero } P, Q\in \mathbb{F}_{q}[T]\,.\] 
Clearly $|\cdot|$ is a nontrivial,  non-archimedian and discrete absolute value  
in $\mathbb{F}_{q}(T)$. This absolute value gives rise to a metric on $\mathbb{F}_{q}(T)$. \\

The completion field of $\mathbb{F}_{q}(T)$ is $\mathbb{F}_{q}((T^{-1}))$, i.e. the field of Laurent series 
over $\mathbb{F}_{q}$. The absolute value of $\mathbb{F}_{q}((T^{-1}))$, which we again denote by $|\cdot |$, is given as follows. 
Let $a \in \mathbb{F}_{q}((T^{-1}))$. For $a=0$, 
define $|a|=0$. If $a \neq 0$, then we can write 
$$a=\displaystyle \sum_{k\leq k_{0}} a_k T^{k}\,\,\mbox{where}\,\,\,\,k_0 \in \mathbb{Z},\,a_k\in \mathbb{F}_{q}\,\,\mbox{and}\,\, a_{k_0}
\neq 0\,. $$
\noindent We define $k_0$ as the \textit{degree} of $a$, which will be denoted by $\degree a$,  and     
$|a|:= q^{\degree a}$. This clearly extends the absolute 
value $|\cdot|$ of $\mathbb{F}_{q}(T)$ to $\mathbb{F}_{q}((T^{-1}))$ and moreover, 
the extension remains non-archimedian and discrete. Let $\Lambda$ and $F$ 
denote $\mathbb{F}_{q}[T]$ and $\mathbb{F}_{q}((T^{-1}))$ respectively from now on. It is obvious that 
$\Lambda$ is discrete in  $F$. For any $d\in \mathbb{N}$, $F^d$ is throughout assumed to be equipped 
with the supremum norm which is defined as follows
\[||\mathbf{x}||\defeq \displaystyle \max_{1\leq i\leq n} |x_i|,\text{ for all } \mathbf{x}=(x_1,x_2,...,x_d)\in F^{d}\,,\]

\noindent and with the topology induced by this norm. Clearly $\Lambda^n$ is discrete in $F^n$. Since the topology on $F^n$ considered here 
is the usual product topology on $F^n$, it follows that  $F^n$ is locally compact as $F$ is locally compact. Let $\lambda$ be the Haar measure on $F^n$ which takes the value 1 on the closed unit ball $||\mathbf{x}||=1$. Note that, by a routine combinatorial argument it is easy to see that for any $t\in  \mathbb{N}$, we have $$\#\{\mathbf{q}\in \Lambda^n: ||\mathbf{q}||=q^t\}=q^{(t+1)n}-q^{tn}= q^{nt}(q^n-1).$$
\subsection{Main results}
Let $\mathscr{H}$ stand for the following hyperplane in what follows: $$\{(\mathbf{x}, \tilde{\mathbf{x}}\cdot\mathbf{a}): \mathbf{x}\in F^{n-1}\},$$ where $\mathbf{a}:=(\alpha_0, \alpha_1, \dots, \alpha_{n-1})\in F^n$, and $\tilde{\mathbf{x}}:= (1, \mathbf{x})$. Assume that $$\psi:\{q^r: r\in \mathbb{Z}_{\geq 0}\}\longrightarrow \{q^r: r\in \mathbb{Z}\}$$ is a nonincreasing approximation function. We say $\mathbf{y}\in \mathscr{H}$ is  \emph{$\psi$-approximable} if, for infinitely many $\mathbf{q} \in \Lambda^n$, one has the following: $$|\mathbf{y}\cdot \mathbf{q}+p|< \psi(||\mathbf{q}||), \text{ for some }p\in \Lambda.$$ The set of all $\psi$-approximable points in $\mathscr{H}$ will be denoted by $\mathcal{W}({\mathscr{H}};\psi)$. We prove the version of classical Khintchine's theorem in the context of affine hyperplanes in positive characteristic under reasonably relaxed hypothesis. Our fist main theorem would be:
\begin{theorem}\label{main}
	Let the set up be as above. Assume that, there exists $\delta >0$ such that, for all but finitely many $q'\in \Lambda$ the following holds:
	\begin{equation}\label{cond}
	\max_{0\leq i\leq n-1}|p_i+\alpha_iq'|>\frac{1}{|q'|^{n-\delta}},
	\end{equation} for all $\mathbf{p}\in \Lambda^n$. Then one has $\lambda(\mathcal{W}({\mathscr{H}};\psi))=0$ whenever $$\displaystyle\sum_{t=0}^{\infty} \psi(q^t)q^{nt}\asymp \sum_{\mathbf{q}\in \Lambda^n}\psi(||\mathbf{q}||)<\infty.$$
\end{theorem}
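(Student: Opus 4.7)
The proof will follow the \emph{Quantitative Nondivergence} (QND) framework of Kleinbock--Margulis \cite{KM}, adapted to positive characteristic as in \cite{Kris1, G-pos}. The plan has three main stages: a Borel--Cantelli reduction, a dynamical reformulation on the space of unimodular lattices in $F^{n+1}$, and an application of QND whose key input is the Diophantine hypothesis \eqref{cond}.

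First I would apply the convergence Borel--Cantelli lemma in the usual way. Writing $\mathcal{W}(\mathscr{H};\psi) = \limsup_{t\to\infty} A_t$ with
\[
A_t = \bigl\{\bx \in F^{n-1} : \exists\, \bq \in \Lambda^n \text{ and } p\in\Lambda \text{ with } \|\bq\| = q^t \text{ and } |\by\cdot\bq + p| < \psi(q^t)\bigr\},
\]
where $\by = (\bx, \tilde{\bx}\cdot \ba)$, it suffices to prove $\sum_{t\geq 0} \lambda(A_t \cap B) < \infty$ for every bounded open box $B \subset F^{n-1}$. Expanding $\by\cdot\bq + p$ produces a linear form in $\bx$,
\[
(p + q_n\alpha_0) + \sum_{i=1}^{n-1}(q_i + q_n\alpha_i)\, x_i,
\]
whose coefficients $c_i := q_i + q_n\alpha_i$ carry both the lattice approximation data and the affine direction $\ba$ of the hyperplane.

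Next, I would encode this in the dynamics on $\mathrm{SL}_{n+1}(F)/\mathrm{SL}_{n+1}(\Lambda)$. For each $\bx \in F^{n-1}$ define an upper triangular unipotent $u(\bx) \in \mathrm{SL}_{n+1}(F)$ whose first row carries $(1,x_1,\dots,x_{n-1},\tilde{\bx}\cdot \ba)$, and for each $t$ choose a diagonal element $g_t$ contracting the coordinate that records the value of the linear form and expanding the coordinates that record $\bq$. With the exponents calibrated so that $\det g_t = 1$ and the box of admissible $(\bq,p)$ maps into the unit ball of $F^{n+1}$, one checks that $\bx \in A_t$ is equivalent to the lattice $g_t u(\bx)\Lambda^{n+1}$ containing a nonzero vector of norm at most a fixed constant, i.e.\ the lattice is driven into a prescribed cusp neighbourhood. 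Applying QND to the family $\{g_t u(\bx)\Lambda^{n+1}\}_{\bx \in B}$ --- whose entries are affine in $\bx$ and therefore $(C,\alpha)$-good by standard estimates in the non-archimedean setting --- yields
\[
\lambda(A_t \cap B) \;\ll\; \psi(q^t)\, q^{nt},
\]
provided one verifies the nondivergence hypothesis: for every primitive subgroup $\Gamma \leq \Lambda^{n+1}$ one has $\sup_{\bx\in B}\|g_t u(\bx)\Gamma\| \geq \rho$ for some fixed $\rho > 0$. Summing over $t$ and invoking the hypothesis $\sum_t \psi(q^t)q^{nt} < \infty$ then closes the Borel--Cantelli step.

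The central obstacle, and the one step where \eqref{cond} is truly indispensable, is the verification of the nondivergence hypothesis. For primitive subgroups whose minimal representatives come from ``generic'' integral directions, the bound on $\|g_t u(\bx)\Gamma\|$ follows cheaply from the integrality of the coefficients together with the expansion rate of $g_t$. The delicate case is that of subgroups whose minimal vector is aligned with the affine direction determined by $\ba$: there the bound reduces to a simultaneous rational approximation statement of the shape $\max_{0\leq i\leq n-1}|p_i + \alpha_i q'| \geq \rho$ for some $q' \in \Lambda$ whose size is controlled by $q^t$. Hypothesis \eqref{cond} provides exactly such a lower bound, with the $\delta$-loss absorbed by a slight perturbation of the exponents in $g_t$ (which costs only a $q^{\delta t}$ factor that is dwarfed by the $(C,\alpha)$-good margin). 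Once this verification is carried out, the Kleinbock--Margulis machinery as employed in \cite{KM, G-hyper} applies and yields the theorem.
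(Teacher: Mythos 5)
There is a genuine gap, and it sits at the heart of your third paragraph: the claim that applying quantitative nondivergence to the family $g_t u(\mathbf{x})\Lambda^{n+1}$ in $\mathrm{SL}_{n+1}(F)/\mathrm{SL}_{n+1}(\Lambda)$ ``yields $\lambda(A_t\cap B)\ll \psi(q^t)q^{nt}$'' is not something QND can deliver. The nondivergence estimate (Theorem \ref{qn_gen} here, as in Kleinbock--Margulis and Kleinbock--Tomanov) bounds the measure of the bad set by $C(\varepsilon/\rho)^{\alpha}\lambda(B)$, where $\alpha$ is a \emph{fixed} exponent coming from the $(C,\alpha)$-good property of linear forms in $n-1$ variables (here $\alpha=\tfrac{1}{n-1}$), not a bound linear in the volume of the cusp neighbourhood. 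With the unimodular calibration you describe, the shortness parameter is $\varepsilon_t\asymp(\psi(q^t)q^{nt})^{1/(n+1)}$, so QND gives $\lambda(A_t\cap B)\ll(\psi(q^t)q^{nt})^{\frac{1}{(n+1)(n-1)}}$. If $\psi(q^t)q^{nt}$ decays slowly --- e.g.\ $\psi(q^t)=q^{-nt-\lceil 2\log_q(t+1)\rceil}$, so that $\psi(q^t)q^{nt}\asymp(t+1)^{-2}$ is summable --- this bound is $\asymp(t+1)^{-2/(n^2-1)}$, which is \emph{not} summable for $n\geq 2$. So your Borel--Cantelli step does not close; the single-pass QND argument proves extremality-type statements (the Sprind\v{z}uk analogue) but not the Khintchine convergence case. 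This is precisely the obstruction that forced the Bernik--Kleinbock--Margulis strategy, which the paper follows.

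The missing idea is the gradient division. The paper splits, for each $t$, according to whether all coefficients $|q_i+\alpha_i q_n|$ are $<1$ or some coefficient is $\geq 1$. In the large-gradient case no dynamics is used at all: an elementary Fubini/strip-counting argument (Proposition \ref{big_prop}) gives a measure bound $\leq q\lambda(U)\psi(q^t)q^{nt}(q^n-1)$ for the set $\mathcal{L}^{\geq}_t$, and this is where the factor $\psi(q^t)q^{nt}$, and hence the hypothesis $\sum_t\psi(q^t)q^{nt}<\infty$, actually enters. In the small-gradient case QND is applied, but not on $F^{n+1}$: one works with a rank-$(n+1)$ module $\Theta$ inside a $2n$-dimensional space whose extra coordinates $\mathbf{e}_{\ast i}$ record the quantities $q_i+\alpha_i q_n$, together with a projected (merely submultiplicative) norm on the exterior algebra that discards components containing $\mathbf{e}_{\ast i}\wedge\mathbf{e}_{\ast j}$; the output is a bound for $\lambda(\mathcal{L}^{<}_t)$ that decays geometrically in $t$ and is summable independently of the fine behaviour of $\psi$ (only $\psi(q^t)\leq q^{-nt}$ for large $t$, a consequence of convergence, is used). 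Your instinct about where \eqref{cond} enters is essentially right --- in the paper it is exactly the lower-bound condition \ref{lower} for rank-one submodules of $\Theta$, with the $\delta$-loss absorbed into the choice of the auxiliary exponent $\beta$ --- but that verification takes place inside this enlarged $2n$-dimensional framework, not in the $\mathrm{SL}_{n+1}$ picture you set up, and without the gradient splitting the argument cannot reach the stated theorem.
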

Consider an open ball $U\subseteq F^{n-1}$. It follows from Theorem \ref{main} that, for almost all $\textbf{x}\in F^{n-1}\in U$, one can find $\kappa>0$ such that 
\begin{equation}\label{kappa}
|(\mathbf{x}, \tilde{\textbf{x}}\cdot\textbf{a})\cdot \mathbf{q}+p|\geq \kappa \psi(||\mathbf{q}||), \text{ for all }\textbf{q}\in \Lambda^n\setminus \{\textbf{0}\} \text{ and }p\in \Lambda,
\end{equation} provided $\sum_{\psi}\defeq \sum_{\mathbf{q}\in \Lambda^n}\psi(||\mathbf{q}||)<\infty$. We are interested to obtain this $\kappa$ independent of $\textbf{x}$ to the extent possible in the sense of measure. In order to emphasize this, let us consider the following subset 
\begin{equation}\label{set}
\mathcal{B}(U,\psi,\kappa)\defeq \{\textbf{x}\in U: \eqref{kappa}\text{ holds}\}.
\end{equation}We aim to see dependence between $\kappa$ and the size of the set \eqref{set}. Our second main theorem quantifies that as follows: 
\begin{theorem}\label{mainquant}
	Let the hypothesis be as in Theorem \ref{main}. Assume further that $\psi(q^r)\leq \frac{1}{q^{rn}}$, for all $r\geq 0$. Fix an open ball $U\subseteq F^{n-1}$. Then there exists two explicitly computable constants $K_0$ and $K_1$, depending upon $n, U$ and $\mathbf{a}$ only, with the following property: for any $\xi\in (0,1)$, the following holds 
	\begin{equation}
	\lambda(\mathcal{B}(U,\psi,\kappa))\geq (1-\xi)\lambda(U)
	\end{equation} with any  $$\kappa<\min \left\{1, \frac{\xi}{2q\sum_{\psi}}, \left(\frac{\xi}{2K_0K_1}\right)^{n^2-1}\right\}.$$ 
\end{theorem}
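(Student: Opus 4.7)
The plan is to bound the measure of the complement $E := U\setminus \mathcal{B}(U,\psi,\kappa)$. Expanding the inner product appearing in \eqref{kappa} and setting $\mathbf b(\mathbf q):=(q_1+\alpha_1 q_n,\ldots,q_{n-1}+\alpha_{n-1}q_n)\in F^{n-1}$, one can write
$$E\ =\ \bigcup_{t\geq 0}\ \bigcup_{\substack{\|\mathbf q\|=q^t \\ p\in\Lambda}}\Big\{\mathbf x\in U:\ \big|\mathbf x\cdot \mathbf b(\mathbf q)+\alpha_0 q_n+p\big|<\kappa\psi(q^t)\Big\}.$$
At each scale $t$ I would split the $\mathbf q$'s into a \emph{geometric regime} (where $\|\mathbf b(\mathbf q)\|$ is comparable to $\|\mathbf q\|$) and a \emph{dynamical regime} (where $\|\mathbf b(\mathbf q)\|$ is strictly smaller than an appropriate power of $\|\mathbf q\|$). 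The two non-trivial threshold conditions on $\kappa$ in the statement correspond exactly to the bounds produced in these two regimes.

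In the geometric regime, each strip $\{\mathbf x\in U:|\mathbf x\cdot \mathbf b(\mathbf q)+c|<\epsilon\}$ has $\lambda$-measure at most $(\epsilon/\|\mathbf b(\mathbf q)\|)\lambda(U)$. Counting the $\mathbf q$'s at scale $q^t$ (there are $q^{nt}(q^n-1)$ of them) together with the bounded number of relevant $p\in\Lambda$ (where the non-archimedean discreteness of $|\cdot|$ contributes the factor $q$), and then summing over $t$, gives a total contribution bounded above by a constant times $q\kappa\sum_\psi\lambda(U)$. Under the bound $\kappa<\xi/(2q\sum_\psi)$ this contribution is at most $(\xi/2)\lambda(U)$.

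The dynamical regime is handled by the positive-characteristic analogue of the quantitative nondivergence theorem of Kleinbock--Margulis \cite{KM}, suitably adapted to function fields (as in the setting used in \cite{G-pos}). The key step is to associate to each $\mathbf x\in U$ a unimodular $\Lambda$-lattice $u_{\mathbf x}\Lambda^{n+1}\subset F^{n+1}$ built from a unipotent matrix in the coordinates $(1,\mathbf x,\tilde{\mathbf x}\cdot\mathbf a)$, together with a calibrated one-parameter diagonal family $g_t$, so that the failure of \eqref{kappa} at scale $q^t$ with $\mathbf b(\mathbf q)$ in the dynamical regime is equivalent to $g_t u_{\mathbf x}\Lambda^{n+1}$ possessing a short vector. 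The Diophantine hypothesis \eqref{cond} is then used to check, uniformly over $\mathbf x\in U$, that the $F$-valued polynomial functions arising as coordinates of rational sub-lattices are $(C,\alpha)$-good with explicit constants depending only on $n$, $U$ and $\mathbf a$, and that no such sub-lattice is identically short over $U$. Applying the nondivergence estimate scale by scale and summing in $t$ yields a bound of the form $K_0 K_1\kappa^{1/(n^2-1)}\lambda(U)$, which is at most $(\xi/2)\lambda(U)$ under the third threshold $\kappa<(\xi/(2K_0K_1))^{n^2-1}$. Adding the two contributions gives $\lambda(E)\leq \xi\lambda(U)$, i.e.\ the claim.

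The main obstacle is the dynamical step: one must choose the lattice cocycle and the flow so that the failure of \eqref{kappa} in the dynamical regime is \emph{exactly} captured by a short-vector event, and then extract $(C,\alpha)$-goodness and non-triviality of the relevant sub-lattice functions from \eqref{cond}. The additional assumption $\psi(q^r)\leq q^{-rn}$ enters crucially here: it ensures that, in the dynamical regime, the short vector produced is unambiguously the one corresponding to the approximation inequality and not a spurious vector from a smaller rational sub-lattice, so that the nondivergence estimate may be invoked cleanly. Once these technical ingredients are in place, the explicit constants $K_0$ and $K_1$ are obtained by tracing through the parameters of the positive-characteristic quantitative nondivergence statement applied to $U$ and $\mathbf a$.
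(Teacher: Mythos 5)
Your overall architecture (split according to the size of the gradient coordinates $q_i+\alpha_iq_n$, handle the large-gradient part by an elementary Fubini/strip-counting bound giving the threshold $\kappa<\xi/(2q\sum_\psi)$, and handle the remaining part by quantitative nondivergence giving the threshold $\kappa<(\xi/(2K_0K_1))^{n^2-1}$) matches the paper. But the dynamical step, as you describe it, has a genuine gap: you propose to encode the failure of \eqref{kappa} by a short-vector event for the $(n+1)$-dimensional family $g_tu_{\mathbf x}\Lambda^{n+1}\subset F^{n+1}$, i.e.\ the standard Kleinbock--Margulis/BKM embedding. For an affine hyperplane this cannot work: the map $\mathbf x\mapsto(1,\mathbf x,\tilde{\mathbf x}\cdot\mathbf a)$ is planar, and if you calibrate a diagonal $g_t$ in $\GL_{n+1}$ so that the $\mathbf e_0$-weight matches $\kappa\psi(q^t)$ and the other $n$ weights match $q^{t+1}$, then keeping the top exterior power (and hence the norms of full-rank submodules) bounded below forces the contraction parameter $\varepsilon$ to be of constant size in $t$, so Theorem \ref{qn_gen} yields a bound that does not decay in $t$ and the sum over scales diverges. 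This is exactly why the paper (following Kleinbock's extremal-subspaces device and Ghosh's hyperplane paper) passes to the $2n\times 2n$ matrix $u_{\mathbf x}$ of \eqref{u_x}: the extra coordinates $\mathbf e_{\ast i}$ carry $\varepsilon(q_i+\alpha_iq_n)$, so the short-vector event also records the small-gradient condition $|q_i+\alpha_iq_n|<1$ (legitimate, since the large-gradient $\mathbf q$ were already disposed of), and the weight $\varepsilon$ (instead of $\varepsilon/T^{t+1}$) on these directions is what allows $|\varepsilon|\sim q^{-(\frac1{n+1}-\beta)t}$ to decay geometrically while the top-rank coefficient stays of size $q^{\beta(n+1)t}\geq 1$ as in \eqref{estimate top}. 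One also needs the modified submultiplicative norm (the projection $\pi$ killing components containing $\mathbf e_{\ast i}\wedge\mathbf e_{\ast j}$) and the integrality lemma (Lemma \ref{enough}, the analogue of Kleinbock's Lemma 4.6) to get the intermediate-rank lower bounds \eqref{estimate middle}; none of this is visible in your $(n+1)$-dimensional setup, and "choose the lattice cocycle so that failure of \eqref{kappa} is exactly a short-vector event" is precisely the nontrivial point you have left unresolved.

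Two smaller misattributions: the Diophantine hypothesis \eqref{cond} is not used to establish $(C,\alpha)$-goodness (that is automatic for the linear coordinate functions by Proposition \ref{good prop}); it is used only to verify the lower bound \ref{lower} of Theorem \ref{qn_gen} for rank-one submodules, via \eqref{estimate 3}--\eqref{estimate 4}. And the extra hypothesis $\psi(q^r)\leq q^{-rn}$ does not serve to exclude ``spurious'' short vectors; it guarantees $\kappa\psi(q^t)\leq |\delta'|=\kappa q^{-nt}$ at \emph{every} scale $t\geq 0$ (not merely for $t\gg 1$, as convergence alone would give), so that $\mathcal{L}^{<}_t(\kappa)$ is contained in the short-vector set for the specific choice of $\delta'$, and so that $\kappa$ enters the final bound as $\kappa^{1/(n^2-1)}$, which is where the exponent $n^2-1$ in the theorem comes from.
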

In order to prove the above theorems, we adopt the general strategy of \cite{G-hyper, GG-quant}, which is originally  invented in \cite{BKM}, namely dividing into two cases depending upon big and small gradients. 
	\section{The gradient division}
	For any $\mathbf{q}\in \Lambda^n\setminus\{\mathbf{0}\}$ and $\kappa>0$, we define the following sets:\[\mathcal{L}(\mathbf{q})\defeq \{\mathbf{x}\in U: |(\mathbf{x}, \tilde{\mathbf{x}}\cdot\mathbf{a})\cdot\mathbf{q}+p|<\psi(||\mathbf{q}||) \text{ for some }p\in \Lambda \}\] and \[\mathcal{L}(\mathbf{q}, \kappa)\defeq \{\mathbf{x}\in U: |(\mathbf{x}, \tilde{\mathbf{x}}\cdot\mathbf{a})\cdot\mathbf{q}+p|<\kappa \psi(||\mathbf{q}||) \text{ for some }p\in \Lambda \}.\]
	
It is clear that $$\displaystyle \limsup_{\mathbf{q}\in \Lambda^n\setminus \{\mathbf{0}\}}\mathcal{L}(\mathbf{q})=\mathcal{W}({\mathscr{H}};\psi)\cap U \text{ and }\displaystyle \bigcup_{\mathbf{q}\in \Lambda^n\setminus \{\mathbf{0}\}}\mathcal{L}(\mathbf{q}, \kappa)=U\setminus\mathcal{B}(U,\psi,\kappa).$$ For any $t\in \mathbb{N}\cup \{0\}$, let $\mathcal{L}^{<}_t$ and $\mathcal{L}^{\geq}_t$ stand for the following sets
\[\displaystyle \left \{\mathbf{x}\in U: \exists \mathbf{q}=(q_1,\dots,q_n)\in \Lambda^n\setminus \{\mathbf{0}\} \text{ and }p\in \Lambda \text{ s.t. }\left | \begin{array}{rcl} |(\mathbf{x}, \tilde{\mathbf{x}}\cdot\mathbf{a})\cdot\mathbf{q}+p|< \psi(q^t)  \\\forall 1\leq i\leq n-1,\,|q_i+\alpha_iq_n|<1 \\ ||\mathbf{q}||=q^t
\end{array}\right. \right\}\] and 
\[\displaystyle \left \{\mathbf{x}\in U: \exists \mathbf{q}=(q_1,\dots,q_n)\in \Lambda^n\setminus \{\mathbf{0}\} \text{ and }p\in \Lambda \text{ s.t. }\left | \begin{array}{rcl} |(\mathbf{x}, \tilde{\mathbf{x}}\cdot\mathbf{a})\cdot\mathbf{q}+p|< \psi(q^t)  \\ \exists i\in \{1,\cdots,n-1\} \text{ s.t. }|q_i+\alpha_iq_n|\geq 1 \\ ||\mathbf{q}||=q^t
\end{array}\right. \right\}\] respectively. Now observe that, in order to show $\lambda(\mathcal{W}({\mathscr{H}};\psi)\cap U)=0$, it will be enough to show the convergence of $\displaystyle \sum_{t=0}^{\infty}\lambda(\mathcal{L}^{<}_t)$ and $\displaystyle \sum_{t=0}^{\infty}\lambda(\mathcal{L}^{\geq}_t)$.\\

To deal with the quantitative part, define similarly, 
\[\mathcal{L}_t^{<}(\kappa)\defeq \displaystyle \left \{\mathbf{x}\in U: \exists \mathbf{q}\in \Lambda^n\setminus \{\mathbf{0}\} \text{ and }p\in \Lambda \text{ s.t. }\left | \begin{array}{rcl} |(\mathbf{x}, \tilde{\mathbf{x}}\cdot\mathbf{a})\cdot\mathbf{q}+p|< \kappa \psi(q^t)  \\\forall 1\leq i\leq n-1,\,|q_i+\alpha_iq_n|<1 \\ ||\mathbf{q}||=q^t
\end{array}\right. \right\}\] and 
\[\mathcal{L}_t^{\geq}(\kappa)\defeq \displaystyle \left \{\mathbf{x}\in U: \exists \mathbf{q}\in \Lambda^n\setminus \{\mathbf{0}\} \text{ and }p\in \Lambda \text{ s.t. }\left | \begin{array}{rcl} |(\mathbf{x}, \tilde{\mathbf{x}}\cdot\mathbf{a})\cdot\mathbf{q}+p|< \kappa \psi(q^t)  \\ \exists i\in \{1,\cdots,n-1\} \text{ s.t. }|q_i+\alpha_iq_n|\geq 1 \\ ||\mathbf{q}||=q^t
\end{array}\right. \right\}.\] for all $t\in \mathbb{N}\cup \{0\}$. We prove that, for suitably chosen $\kappa$, one has the following 
\begin{equation}\label{quant small}\displaystyle \sum_{t=0}^{\infty}\lambda(\mathcal{L}^{<}_t(\kappa))<\frac{\xi}{2}\lambda(U),
\end{equation} and 
\begin{equation}\label{quant large}\displaystyle \sum_{t=0}^{\infty}\lambda(\mathcal{L}^{\geq}_t(\kappa))<\frac{\xi}{2}\lambda(U)
\end{equation} as clearly \eqref{quant small} and \eqref{quant large} together implies $\lambda\left(\displaystyle \bigcup_{\mathbf{q}\in \Lambda^n\setminus \{\mathbf{0}\}}\mathcal{L}(\mathbf{q}, \kappa)\right)< \xi \lambda(U)$.
\section{Estimating  $\lambda(\mathcal{L}^{\geq}_t)$ and $\lambda(\mathcal{L}^{\geq}_t(\kappa))$}\noindent We need the following general Proposition: 
\begin{proposition} \label{big_prop}Fix $\mathbf{q}=(q_1, \dots, q_n)\in \Lambda^n$ be such that 
	\begin{equation}\label{large} |q_i+\alpha_iq_n|\geq 1, \text{ for some }1\leq i\leq n-1
	\end{equation} For any $m\in \mathbb{N}$, consider the set
\begin{equation}\label{Prop}
\left \{\mathbf{x}\in U: |(\mathbf{x}, \tilde{\mathbf{x}}\cdot\mathbf{a})\cdot\mathbf{q}+p|<\frac{1}{q^m}, \text{ for some }p\in \Lambda \right \}. 
\end{equation}Then the set defined above in \eqref{Prop} has measure $\displaystyle \leq \frac{q\lambda(U)}{q^m}$. 
		\end{proposition}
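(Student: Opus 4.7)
The plan is to linearize the defining inequality in the coordinates of $\mathbf{x}$, apply Fubini in the coordinate direction where the linear coefficient is large (thanks to \eqref{large}), and then invoke the fact that the lattice $\Lambda$ sits inside $F$ with covolume $1/q$.

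Expanding, since $\tilde{\mathbf{x}}=(1,\mathbf{x})$, one has
\[
(\mathbf{x}, \tilde{\mathbf{x}}\cdot\mathbf{a})\cdot\mathbf{q}
= \alpha_0 q_n + \sum_{i=1}^{n-1} c_i\, x_i,
\qquad \text{where } c_i := q_i+\alpha_i q_n.
\]
By \eqref{large} some $c_{i_0}$ with $1\le i_0\le n-1$ satisfies $|c_{i_0}|\ge 1$; after relabeling, assume $i_0=1$. Writing $U=B(\mathbf{x}_0,R)$ in the sup norm, $U$ decomposes as a product of one-dimensional balls $B(x_0^{(i)},R)\subset F$, and $\lambda(U)=R^{n-1}$.

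By Fubini, integrating $x_1$ last, it suffices to bound, for each fixed $(x_2,\ldots,x_{n-1})$, the one-dimensional measure of
\[
\bigl\{x_1\in B(x_0^{(1)},R) : |c_1 x_1+b+p|<q^{-m}\text{ for some }p\in\Lambda\bigr\},
\]
where $b := \alpha_0 q_n+\sum_{i\ge 2}c_i x_i$ does not depend on $x_1$. The linear substitution $y=c_1 x_1+b$ has Jacobian $|c_1|$, so the measure above equals $|c_1|^{-1}$ times the $y$-measure of $B(c_1 x_0^{(1)}+b,\,|c_1|R)\cap (\Lambda+B(0,q^{-m}))$. Since $\Lambda\subset F$ has covolume $1/q$, the $\Lambda$-periodic set $\Lambda+B(0,q^{-m})$ has relative density $q^{1-m}$ in $F$, and in the ultrametric a closed ball of radius $\rho$ decomposes cleanly into $q\rho$ translates of a fundamental domain of $\Lambda$ once $\rho\ge 1/q$. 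Hence the $y$-measure is at most $q|c_1|R\cdot q^{-m}$, giving bound $qR\cdot q^{-m}$ on the $x_1$-measure. Integrating over the remaining $(n-2)$ coordinates, each varying in a ball of measure $R$, yields
\[
\lambda(\text{bad set}) \;\le\; R^{n-2}\cdot\frac{qR}{q^m} \;=\; \frac{q\,\lambda(U)}{q^m},
\]
as claimed.

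The main technical care is in the lattice-point count used above: one must verify that any closed ball of radius $\rho\ge 1/q$ in $F$ contains exactly $q\rho$ points of $\Lambda$, which is immediate from $\lambda(F/\Lambda)=1/q$ combined with the fact that a closed ball in an ultrametric field is a disjoint union of copies of any smaller closed ball. The edge case $|c_1|R<1/q$ is handled by noting that the target ball $B(c_1 x_0^{(1)}+b,|c_1|R)$ then lies in a single fundamental domain, hence meets $\Lambda+B(0,q^{-m})$ in at most one ball of radius $q^{-m}$, for which the trivial bound $\min(|c_1|R,q^{-m})$ on the $y$-measure yields the same final estimate. The role of hypothesis \eqref{large} is precisely to guarantee $|c_1|\ge 1$, making the one-dimensional scaling argument work in the correct direction.
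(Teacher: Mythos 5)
Your main-case argument is essentially the paper's own proof: expand the linear form so that the coefficient $c_{i_0}=q_{i_0}+\alpha_{i_0}q_n$ of one coordinate satisfies $|c_{i_0}|\ge 1$ by \eqref{large}, apply Fubini in that coordinate, and count the relevant translates of $\Lambda$. The paper organizes the count globally (it bounds the measure of each nonempty strip $\mathscr{S}_p$ by $r(U)^{n-2}/(\|\beta\|q^m)$ and then bounds the number of nonempty strips by $q\|\beta\|r(U)$), whereas you substitute $y=c_1x_1+b$ and count lattice points fiberwise; these are the same computation, and your fiberwise version even avoids having to single out the coordinate realizing $\max_j|\beta_j|$. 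Whenever $|c_1|R\ge 1/q$ (in particular whenever $R\ge 1/q$, since $|c_1|\ge 1$) your argument is complete and correct, and your lattice-point count in a closed ball of radius $\rho\ge 1/q$ is right.

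The gap is your last sentence about the edge case $|c_1|R<1/q$: the trivial bound $\min(|c_1|R,q^{-m})$ on the $y$-measure gives the fiber bound $\min(R,q^{-m}/|c_1|)$ and hence the total bound $R^{n-2}\min(R,q^{-m}/|c_1|)$, and for this to be $\le qR^{n-1}q^{-m}$ you would need $m\le 1$ or $R\ge 1/q$, neither of which holds there (take $|c_1|=1$ and $q^{-m}\le R<1/q$). In fact no argument can close this case, because the stated bound fails for small balls: with $n=2$, $\mathbf{a}=\mathbf{0}$, $\mathbf{q}=(1,0)$ (so \eqref{large} holds), $U=\{x:|x|\le q^{-3}\}$ and $m=3$, the set \eqref{Prop} equals $\{x:|x|\le q^{-4}\}$, of measure $q^{-4}$, whereas $q\lambda(U)/q^m=q^{-5}$. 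The paper's proof has the same blind spot (its strip count $q\|\beta\|r(U)$ is only meaningful when $\|\beta\|r(U)\ge 1/q$), so the proposition should be read with $r(U)\ge 1/q$, or with the bound multiplied by $\max\{1,(q\,r(U))^{-1}\}$; this is harmless for the theorems, where $U$ is fixed. So keep your main case, but either add the hypothesis $|c_1|R\ge 1/q$ (e.g. $r(U)\ge 1/q$) or state the corrected bound in the small-ball regime rather than asserting that it "yields the same final estimate".
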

	\begin{proof}
		 Observe that, for any $\mathbf{x}=(x_1,\dots, x_{n-1}) \in F^{n-1}$, 
	\begin{eqnarray*}
		(\mathbf{x}, \tilde{\mathbf{x}}\cdot\mathbf{a})\cdot\mathbf{q}+p&=(q_1+\alpha_1q_n)x_1+\cdots+(q_{n-1}+\alpha_{n-1}q_n)x_{n-1}+\alpha_0q_n+p\\ & = \beta_1x_1+\cdots+\beta_{n-1}x_{n-1}+y, 
		\end{eqnarray*}
	where $\beta_i\defeq q_i+\alpha_iq_n$, for $i=1,\dots,n-1$ and $y\defeq \alpha_0q_n+p$. Let $\beta\defeq (\beta_1, \dots, \beta_{n-1})$ and $\|\beta\|=\beta_i$. Clearly, from \eqref{large}, one has $|\beta_i|\geq 1$. Also $U$ is of the form $U_1\times \cdots \times U_{n-1}$, where $U_1,\dots, U_{n-1}$ are one dimensional balls having the same radius with that of $U$.  Now the set defined in \eqref{Prop} is the union of some strips of the following type:
	\[\mathscr{S}_p\defeq\left \{(x_1,\dots,x_{n-1})\in U_1\times \cdots\times U_{n-1}: |\beta_1x_1+\cdots+\beta_{n-1}x_{n-1}+y|<\frac{1}{q^m}\right \}.\] We  apply Fubini's theorem to estimate $\lambda(\mathscr{S}_p)$ as follows: 
	 \begin{eqnarray*}\displaystyle \int_{F^{n-1}}\mathbf{1}_{\mathscr{S}_p}(x_1,\dots, x_{n-1})\,dx_1\,\dots\,dx_{n-1}\\ = \displaystyle \int_{U_1\times \cdots \times U_{i-1}\times U_{i+1}\times \cdots \times U_n}\lambda \left(\left\{x_i\in B_i: \left|\beta_ix_i+ \left(\displaystyle \sum_{j\neq i} \beta_jx_j+ y\right)\right|<\frac{1}{q^m}\right\}\right)\,\prod_{j\neq i} dx_j \\ \displaystyle \leq \frac{r(U)^{n-2}}{|\beta_i|q^m} \displaystyle \leq \frac{r(U)^{n-2}}{\|\beta\|q^m},
	 		\end{eqnarray*} where $r(U)$ stands for the radius of the ball $U$. Observe that, if $p_1\neq p_2\in \Lambda$ are such that $\mathscr{S}_{p_1}, \mathscr{S}_{p_2}\neq \emptyset$ then \[1\leq |p_1-p_2|\leq \|\beta\|r(U). \] From this, it follows that number of such nonempty strips is at most $q\|\beta\|r(U)$. Hence, the measure of the set defined in \eqref{Prop} is at most $$q\|\beta\|r(U)\times \displaystyle  \frac{r(U)^{n-2}}{\|\beta\||q^m}=\frac{q\lambda(U)}{q^m}.$$
 		\end{proof}
 	In view of this, one obtains from Proposition \ref{big_prop} that, 
 	\[\lambda(\mathcal{L}^{\geq}_t)\leq q\lambda(U) (q^n-1) \psi(q^t)q^{nt}, \,\forall t\in \mathbb{N}\]This leads us to the following desired convergence:
 	$$\displaystyle \sum_{t=0}^{\infty}\lambda(\mathcal{L}^{\geq}_t)\leq \displaystyle q\lambda(U)\sum_{t=0}^{\infty} \psi(q^t)q^{nt}(q^n-1)=q\lambda(U)\sum_{\mathbf{q}\in \Lambda^n}\psi(||\mathbf{q}||)<\infty.$$
 	In the similar manner, one also obtains from Proposition \ref{big_prop} that, for any $t
 \in \mathbb{N}$, \[\lambda(\mathcal{L}^{\geq}_t(\kappa))\leq q\lambda(U)(q^n-1) \kappa \psi(q^t)q^{nt},\]  and thus, it is now immediate that, 
 	$$ \sum_{t=0}^{\infty}\lambda(\mathcal{L}^{\geq}_t(\kappa))\leq  q\lambda(U)\kappa \sum_{t=0}^{\infty} \psi(q^t)q^{nt}(q^n-1)=q\lambda(U)\kappa \textstyle \sum_{\psi}=\kappa q \textstyle \sum_{\psi}\lambda(U)<\frac{\xi}{2}\lambda(U).$$
 	\indent We now turn  to the estimates of $\lambda(\mathcal{L}_t^{<}))$ and $\lambda(\mathcal{L}_t^{<}(\kappa))$. For this purpose, we use the `Quantitative nondivergence' estimate, which has originally been invented by  D. Y. Kleinbock and G. A. Margulis in \cite{KM} in the space of all unimoduloar lattices in euclidean spaces, and subsequently, extended, generalised by several authors.  Specifically to this context, we need a slight generalisation of \cite[Theorem 7.3]{KT}, which can be regarded as the function field version of \cite[Theorem 6.2]{BKM} and yields us $ \sum_{t=1}^{\infty}\lambda(\mathcal{L}^{<}_t)<\infty$ and \eqref{quant small}. Actually,  behind the proof of any of the quantivative nondivergence results mentioned above, there lies a subtle analysis of maps of posets into the spaces of `good functions'. Hence, to begin with, it is necessary to recall the basic properties of `good functions'. This is the objective of following short section consisting of the statements of the some results without proof. 
 	\section{Good functions and their basic properties}
 	D. Y. Kleinbock and G. A. Margulis introduced the notion of a `good function' in \cite{KM}  and later on, it has been generalized to any Besicovicth space by Kleinbock and Tomanov in \cite{KT}. Here we follow  \S's 1 and 2 of \cite{KT} closely. For the sake of generality, we assume $X$ is a Besicovitch metric space, $U\subseteq X$ is  open, $\nu$ is a radon measure
 	on $X$, $(\mcl{F},|\cdot|)$ is a valued 
 	field and $f: X\longrightarrow\mcl{F}$ is a given function such that $|f|$ is measurable. For any $B\subseteq X$, we set
 	$$ ||f||_{\nu,B} := \displaystyle \sup_{x\in B\cap \text{ supp }(\nu)} |f(x)|.$$
 	
 	\begin{definition}\label{defn:C,alpha}
 		For $C,\alpha \textgreater\,0$, $f$ is said to be $(C,\alpha)-good$ on $U$ with respect to $\nu$ if for every ball $B\subseteq U$
 		with center in $\text{supp }(\nu)$, one has
 		\[\nu(\{x\in B: |f(x)|\,\textless \varepsilon\})\leq C\left(\frac{\varepsilon}{||f||_{\nu,B}}\right)^{\alpha} \nu(B)\,.\]
 	\end{definition}
 	The following properties are immediate from Definition \ref{defn:C,alpha}. 
 	\begin{lemma} \label{lem:C,alpha} Let $X,U,\nu, \mcl{F}, f, C,\alpha,$ be as given above. Then one has 
 		\begin{enumerate}
 			\item $f$ \text{ is } $(C,\alpha)-good$ \text{ on  }$U$ with respect to $\nu \Longleftrightarrow \text{ so is } |f|$.
 			\item $f$ is $(C,\alpha)-good$ on $U$  with respect to $\nu$ $\Longrightarrow$ so is $c f$ for all $c \in \mcl{F}$.
 			\item \label{item:sup} $\forall i\in I, f_i$ are $(C,\alpha)-good$ on $U$ with respect to $\nu$ and $\sup_{i\in I} |f_i|$ is measurable $\Longrightarrow$ so is $\sup_{i\in I} |f_i|$.
 			\item $f$ is $(C,\alpha)-good$ on $U$ with respect to $\nu$ and $g :V \longrightarrow \mathbb{R}$ is a continuous function such 
 			that $c_1\leq |\frac{f}{g}|\leq c_2$ 
 			for some $c_1,c_2 \,\textgreater \,0\Longrightarrow g$ is $(C(\frac{c_2}{c_1})^{\alpha},\alpha)$ good on $U$ with respect to $\nu$.
 			\item Let $C_2 \,\textgreater \,1$ and 
 			$\alpha _2\,\textgreater\,0$. $f$ is $(C_1,\alpha_1)-good$ on $U$ with respect to $\nu$ and $C_1 \leq C_2, \alpha_2 \leq \alpha_1 \Longrightarrow f$ is
 			$(C_2,\alpha_2)-good$ on $V$ with respect to $\nu$.
 		\end{enumerate}   \end{lemma}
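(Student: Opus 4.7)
All five properties are immediate unwindings of Definition \ref{defn:C,alpha}, so the plan is simply to record, in each case, the elementary observation that makes the inequality go through; there is no real obstacle.

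For (1), the definition only involves $|f|$, so there is literally nothing to check. For (2), the case $c=0$ is trivial; for $c\neq 0$, the point is the homogeneity $\|cf\|_{\nu,B}=|c|\,\|f\|_{\nu,B}$ together with the identity $\{x\in B:|cf(x)|<\varepsilon\}=\{x\in B:|f(x)|<\varepsilon/|c|\}$, so the ratio $\varepsilon/\|cf\|_{\nu,B}=(\varepsilon/|c|)/\|f\|_{\nu,B}$ is unchanged and the inequality transfers verbatim.

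For (3), I would use the two commutation identities
\[
\Bigl\{x\in B:\sup_{i}|f_i(x)|<\varepsilon\Bigr\}=\bigcap_{i}\{x\in B:|f_i(x)|<\varepsilon\},
\qquad
\Bigl\|\sup_{i}|f_i|\Bigr\|_{\nu,B}=\sup_{i}\|f_i\|_{\nu,B}.
\]
Bounding the intersection by a single $i_0$-factor and then letting $\|f_{i_0}\|_{\nu,B}$ approach the supremum gives the claim. For (4), the two-sided bound $c_1|g|\le|f|\le c_2|g|$ gives the inclusion $\{|g|<\varepsilon\}\subseteq\{|f|<c_2\varepsilon\}$ and the norm comparison $\|f\|_{\nu,B}\ge c_1\|g\|_{\nu,B}$; applying the $(C,\alpha)$-good hypothesis to $f$ and then substituting yields the factor $C(c_2/c_1)^\alpha$.

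For (5), I would split on the ratio $\varepsilon/\|f\|_{\nu,B}$: if it is $\le 1$, then $\alpha_2\le\alpha_1$ forces $(\varepsilon/\|f\|_{\nu,B})^{\alpha_1}\le(\varepsilon/\|f\|_{\nu,B})^{\alpha_2}$, and combined with $C_1\le C_2$ the $(C_1,\alpha_1)$-bound upgrades to the $(C_2,\alpha_2)$-bound; if it is $>1$, then $C_2(\varepsilon/\|f\|_{\nu,B})^{\alpha_2}\nu(B)\ge\nu(B)$ (using $C_2\ge 1$, implicit in the statement since $C_2>C_1\ge 1$ for any nontrivial good function), which trivially dominates $\nu$ of any measurable subset of $B$. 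The only thing I would be slightly careful about is item (3)'s measurability hypothesis, which is assumed and is only used so that $\nu(\{\sup_i|f_i|<\varepsilon\})$ makes sense.
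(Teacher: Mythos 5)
Your verification is correct and is exactly the routine unwinding that the paper treats as ``immediate from Definition \ref{defn:C,alpha}'' (no proof is given there, following \cite{KM, KT}): each of your five observations is the standard one. The only point worth a word is the case $c=0$ in (2), where one needs the usual convention for the zero function (interpreting $\varepsilon/\|cf\|_{\nu,B}$ as $+\infty$), but this is harmless and does not affect the argument.
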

 	A prototype of good functions is polynomial functions over any local field. Indeed, the following has been proved in \cite{KT}:
 	\begin{proposition}[Lemma 3.4, \cite{KT}]\label{good prop}
 		Let $\mathcal{F}$ be $\mathbb{R}$ or a local field with an ultametric valuation. Then for any $d, k\in \mathbb{N}$ and $f\in \mathcal{F}[x_1,\dots,x_d]$ with $\deg f\leq k$, $f$ is $(C,1/dk)$-good with respect to $\lambda$, where $\lambda=$ Lebesgue measure if $\mathcal{F}=\mathbb{R}$ and $=$ the normalized Haar measure such that the volume of the unit ball is $1$ if $\mathcal{F}$ is ultrametric, and $C$ is a constant that depends only on $d$ and $k$. 
 	\end{proposition}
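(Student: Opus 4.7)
The plan is to reduce the multivariable statement to a one-variable lemma that can be handled by factoring. First, for $g \in \mathcal{F}[x]$ with $\deg g \leq k$, I would establish the one-variable case: $g$ is $(C_k, 1/k)$-good on $\mathcal{F}$. In the ultrametric setting, factor $g(x) = c \prod_{i \leq m}(x - \alpha_i)$ over the algebraic closure of $\mathcal{F}$ with $m \leq k$. On a ball $B = B(x_0, r)$ one has $\|g\|_{\lambda, B} = |c| \prod_i \max(r, |x_0 - \alpha_i|)$, exploiting the ultrametric geometry. The sublevel set $\{x \in B : |g(x)| < \varepsilon\}$ is then contained in a union of at most $k$ small ultrametric balls around those roots $\alpha_i$ lying near $B$; controlling their radii via $\prod_i r_i(\varepsilon) \leq \varepsilon/|c|$ and an AM--GM-type inequality delivers a bound of the shape $C_k\, r \cdot (\varepsilon / \|g\|_{\lambda, B})^{1/k}$, which is the desired one-variable estimate. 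The real case uses the analogous factorization over $\mathbb{C}$ with intervals in place of ultrametric balls, following the original Kleinbock--Margulis argument.

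The multivariable statement then follows by induction on $d$. Given $f \in \mathcal{F}[x_1, \dots, x_d]$ of total degree $\leq k$, I would pick a coordinate (say $x_d$); for each fixed $(x_1, \dots, x_{d-1})$ the restriction $f(x_1, \dots, x_{d-1}, \cdot)$ is a polynomial in $x_d$ of degree $\leq k$, so the one-variable estimate applies slicewise with exponent $1/k$. Fubini integrates these bounds over the base ball $B_1 \times \cdots \times B_{d-1}$. The slicewise supremum $M(x_1, \dots, x_{d-1}) \defeq \sup_{x_d \in B_d} |f(x_1, \dots, x_d)|$ is controlled by the inductive hypothesis applied to the lower-dimensional polynomial structure encoding $M$ (or directly by a coefficient estimate), and combining the outer and inner bounds via the closure properties collected in Lemma \ref{lem:C,alpha} yields the exponent $1/(dk)$ after the $d$-fold iteration, with a constant that depends only on $d$ and $k$.

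The main obstacle will be the uniform control of slicewise sup-norms: a priori, on many slices $M(x_1, \dots, x_{d-1})$ may be much smaller than $\|f\|_{\lambda, B}$, which would degrade the Fubini estimate badly. In the ultrametric setting one overcomes this using the non-archimedean fact that on a ball a polynomial attains its supremum norm on a set of comparatively large measure, so Fubini can be restricted to the good slices and the exceptional slices absorbed into the constant. Tracking this loss across the $d$-fold induction is precisely what produces the exponent $1/(dk)$, rather than the sharper $1/k$ from a single slicing step, and the accumulated constant $C_{d,k}$ depends only on $d$ and $k$ as claimed.
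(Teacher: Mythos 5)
The paper does not prove this proposition at all --- it is quoted as Lemma 3.4 of \cite{KT} --- so there is no internal argument to compare with; your outline (one-variable case via factorization over the algebraic closure and the geometry of ultrametric balls, then coordinate-by-coordinate induction with a Fubini-type combination of exponents, the iterated loss producing $1/(dk)$) is essentially the standard Kleinbock--Margulis/Kleinbock--Tomanov proof that the citation points to, including the correct identification of the slicewise sup-norm control as the crux. One small correction: over a local field with finite residue field the claimed equality $\|g\|_{\lambda,B}=|c|\prod_i\max(r,|x_0-\alpha_i|)$ is false (e.g.\ $g(x)=x^q-x$ on the unit ball of $\mathbb{F}_q((T^{-1}))$ has sup norm $q^{-1}$ while the right-hand side is $1$), but this is harmless because the sublevel-set estimate only uses the inequality $\|g\|_{\lambda,B}\leq |c|\prod_i\max(r,|x_0-\alpha_i|)$, which does hold, so the final bound in terms of $\|g\|_{\lambda,B}$ still follows.
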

 	Our next section deals with the function field analogue of \cite[Theorem 6.2]{BKM} obtained by generalizing \cite[Theorem 7.2]{KT}. 
 	 	 \section{A generalization of `Quantitative nondivergence' estimate}
 	We adopt the setting of \cite{KT} in this section. To begin with, fix a metric space $X$, $B\subseteq X$, a poset $\mathfrak{P}$ and a map $\varphi$ from  $\mathfrak{P}$ to $C(B)$, i.e., the space of all continuous real valued functions on $B$, which we denote by $s\mapsto \varphi_s, \,\forall s\in \mathfrak{P}$. Given two positive numbers $\varepsilon$ and $\rho$ with $\varepsilon\leq \rho$, we say that a point $z\in B$ is $(\varepsilon, \rho)$-\emph{protected relative to }$\mathfrak{P}$ if one can find a totally ordered subset $\mathfrak{S}_z$ of $\mathfrak{P}$ with the following two properties:
 	\begin{enumerate} [label=(\alph*)]
 		\item \label{protected 1} $\forall s\in \mathfrak{S}_z, \varepsilon\leq |\varphi_s(z)|\leq \rho$; and 
 		\item \label{protected 2} $|\varphi_s(z)|\geq \rho$, for every $s\in \mathfrak{P}\setminus \mathfrak{S}_z$ comparable to every element of $\mathfrak{S}_z$.  
 	\end{enumerate}
  The set of all such points is denoted by $\Phi(\varepsilon, \rho, \frk{P})$ henceforth. \\
  
  For $m\in \mathbb{N}$ and a ball $B=B(x;r)\subseteq X$, where $x\in X$ and $r\,\textgreater\,0$, we shall use the notation
  $3^kB$, where $k\in \mathbb{N}$, to denote the ball $B(x;3^kr)$. A locally finite Borel measure $\nu$ on $X$ is said to be \emph{uniformly Federer}  if there exists $D>0$ such that, for any ball centrer in the support of $\mu$, one has 
   \[\frac{\nu(3B)}{\nu(B)}\leq D\,.\]
In that case, we set $$D_{\nu}\defeq \displaystyle \sup_{\text{ball }B\subseteq X \text{ centered in supp } \nu }\frac{\nu(3B)}{\nu(B)}.$$
	\begin{theorem}[Theorem 6.1, \cite{KT}]\label{theorem_poset}
   
  Assume that $X$ is a Besicovitch metric space, $\mu$ is a uniformly Federer measure on $X$, $k\in \mathbb{N}$, and $C, \alpha, \rho >0$. Let $\mathfrak{P}$ be some poset, $B\subseteq X$ be a ball and $\varphi: \mathfrak{P}\longrightarrow C(3^kB)$ such that we have the following:
  \begin{enumerate} [label=(\alph*)]
  	\item no linearly ordered subset of $\mathfrak{P}$ contains more than $k$ elements, 
  	\item $\forall s\in \mathfrak{P}$,   $\varphi_s$ is $(C,\alpha)$-good on $3^kB$ with respect to $\mu$, 
  	\item  $\forall s\in \mathfrak{P}$, $\|\varphi_s\|_{\mu, B}\geq \rho$, and 
  	\item $\forall y\in 3^k B\cap \text{supp }\mu$, $\#\{s\in \mathfrak{P}: |\varphi_s(y)|<\rho\}$ is finite. 
  \end{enumerate}
Then for every $\varepsilon>0$ with $\varepsilon\leq \rho$, one has 
\[\mu(B\setminus \Phi(\varepsilon, \rho, \mathfrak{P}) \leq kC(N_XD_{\mu}^2)^k \left(\frac{\varepsilon}{\rho}\right)^{\alpha}\mu(B), \] where $N_X$ is the Besicovitch constant for $X$. 
  \end{theorem}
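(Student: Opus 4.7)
The plan is to proceed by induction on $k$, the uniform bound on the length of chains in $\mathfrak{P}$. As a preliminary combinatorial step, for each $z \in 3^k B \cap \mathrm{supp}\,\mu$ I would introduce $\mathfrak{S}^{*}_z := \{s \in \mathfrak{P} : |\varphi_s(z)| \leq \rho\}$, which is finite by hypothesis (d). A short verification shows that $z \in \Phi(\varepsilon, \rho, \mathfrak{P})$ iff some maximal chain of $\mathfrak{S}^{*}_z$ (in the order inherited from $\mathfrak{P}$) has every element $s$ satisfying $|\varphi_s(z)| \geq \varepsilon$: sufficiency is immediate, since any element extending such a chain inside $\mathfrak{S}^{*}_z$ would violate its maximality, while necessity follows by extending a witnessing chain to a maximal one in $\mathfrak{S}^{*}_z$, the added elements necessarily satisfying $|\varphi_s(z)| = \rho$ in view of condition (b). Thus $z$ is unprotected exactly when every maximal chain of $\mathfrak{S}^{*}_z$ meets the small set $\{s : |\varphi_s(z)| < \varepsilon\}$.

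For the base case $k = 1$ the poset is an antichain, so the unprotected set coincides with $U_1 := \{z \in B : \exists\, s \in \mathfrak{P},\ |\varphi_s(z)| < \varepsilon\}$. To bound $\mu(U_1)$ I would run a Besicovitch-style covering argument. For each $z \in U_1$, pick $s_z$ with $|\varphi_{s_z}(z)| < \varepsilon$ and let $r_z$ be the smallest radius such that $\|\varphi_{s_z}\|_{\mu,\,B(z,3r_z)} \geq \rho$; this exists because $\|\varphi_{s_z}\|_{\mu, B} \geq \rho$. Applying the $(C,\alpha)$-good property of $\varphi_{s_z}$ on $B(z, 3r_z)$ and then the uniform Federer property yields
\[
\mu\bigl(\{|\varphi_{s_z}| < \varepsilon\} \cap B(z, r_z)\bigr) \leq C D_\mu (\varepsilon/\rho)^{\alpha} \mu(B(z, r_z)).
\]
Extracting a subfamily of $\{B(z, r_z)\}_{z\in U_1}$ of multiplicity at most $N_X$ via Besicovitch's theorem and summing then gives $\mu(U_1) \leq C N_X D_\mu^2 (\varepsilon/\rho)^\alpha \mu(B)$, matching the claimed constant when $k=1$.

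For the inductive step $k-1 \Rightarrow k$, for each unprotected $z$ I would select a maximal chain in $\mathfrak{S}^{*}_z$ whose minimum $s_0(z)$ satisfies $|\varphi_{s_0(z)}(z)| < \varepsilon$, which is possible by the reformulation above. Covering $B \setminus \Phi(\varepsilon,\rho,\mathfrak{P})$ by balls adapted to $s_0(z)$ exactly as in the base case yields one factor $C N_X D_\mu^2 (\varepsilon/\rho)^\alpha$ from applying the good and Federer properties to $\varphi_{s_0(z)}$. On each such ball the inductive hypothesis, applied to the subposet of elements strictly above $s_0(z)$ (whose chains have length at most $k-1$), contributes a further factor $(k-1) C (N_X D_\mu^2)^{k-1} (\varepsilon/\rho)^\alpha$, and summing over the cover yields the claimed bound $kC(N_X D_\mu^2)^k(\varepsilon/\rho)^\alpha \mu(B)$. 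The main obstacle is the localization step: one must verify that on each covering ball the $(C,\alpha)$-good and uniformly Federer hypotheses continue to hold on the enlargement by a factor of $3^{k-1}$, so that the inductive hypothesis truly applies with the same constants, and that these enlarged covering balls all remain inside $3^k B$. Careful bookkeeping of radii, sup-norms, and ball inclusions across the $k$ levels is the technical core of the argument.
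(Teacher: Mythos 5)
The paper itself does not prove this statement: it is quoted from Kleinbock--Tomanov \cite{KT} (their Theorem 6.1), so your proposal can only be measured against that argument. Your overall strategy --- induction on the chain-length bound $k$, a Besicovitch covering of the bad set by balls on whose dilates the relevant function has $\mu$-sup at least $\rho$, and the $(C,\alpha)$-good plus Federer estimates on each such ball --- is indeed the skeleton of the Kleinbock--Margulis/Kleinbock--Tomanov proof, and your reformulation of protectedness via maximal chains of $\mathfrak{S}^*_z$ is essentially sound (note, though, that hypothesis (d) only makes $\{s:|\varphi_s(z)|<\rho\}$ finite, not $\{s:|\varphi_s(z)|\le\rho\}$; what you actually need there is merely that chains have at most $k$ elements).

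However, there are genuine gaps. First, the selection step driving your induction is false: an unprotected point need not admit a maximal chain of $\mathfrak{S}^*_z$ whose \emph{minimum} is $\varepsilon$-small. Take $\mathfrak{P}=\{s_1<s_2\}$ with $|\varphi_{s_1}(z)|=\rho$ and $|\varphi_{s_2}(z)|<\varepsilon$: the only maximal chain is $\{s_1,s_2\}$, its minimum has value $\rho$, yet $z$ is unprotected. Second, already in the base case the covering argument does not close: the Besicovitch subfamily covers the bad set, but the bad set inside a covering ball $B(z_i,r_{z_i})$ need not be contained in $\{|\varphi_{s_{z_i}}|<\varepsilon\}$ --- a point of that ball may be bad because of a different poset element --- so summing your per-ball estimates bounds the measure of the wrong set. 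This is precisely where the actual proof must handle all elements marked at the centre simultaneously (and where hypothesis (d) genuinely enters, through a finite maximum of radii), rather than one chosen element per centre. Third, the bookkeeping of the inductive step cannot produce the stated bound: composing a per-ball factor $CN_XD_{\mu}^2(\varepsilon/\rho)^{\alpha}$ multiplicatively with an inductive factor $(k-1)C(N_XD_{\mu}^2)^{k-1}(\varepsilon/\rho)^{\alpha}$ yields $C^2$ and $(\varepsilon/\rho)^{2\alpha}$, whereas the theorem's bound is linear in $k$ with a single power $(\varepsilon/\rho)^{\alpha}$; this signals that $B\setminus\Phi(\varepsilon,\rho,\mathfrak{P})$ must be decomposed additively across the $k$ levels of the poset and estimated level by level, which your outline does not set up. The radius/localization bookkeeping you flag at the end is real but secondary; the missing content is this correct decomposition, which is the heart of the argument in \cite{KT}.
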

We apply Theorem \ref{theorem_poset} to the poset of submodules of some fixed $\mathcal{D}$ submodule of $\mathcal{R}^m$, where  $\mathcal{D}$ is a PID, $\mathcal{K}$ is the quotient field of $\mathcal{D}$, $\mathcal{R}$ is an extension of the field $\mathcal{K}$ and $m\in \mathbb{N}$.  If $\Delta$ is a $\mathcal{D}$ submodule of $\mathcal{R}^m$, let us 
denote by $\mathcal{R}\Delta$ its $\mathcal{R}$-linear span
inside $\mathcal{R}^m$. If $\Theta$ is a  $\mathcal{D}$ submodule of $\mathcal{R}^m$ and $\Delta$ is a submodule of $\Theta$, we say that 
$\Delta$ \textit{is primitive in} $\Theta$ if $\Delta=\mathcal{R}\Delta \cap \Theta$, (equivalently, $\Delta=\mathcal{K}\Delta \cap \Theta$). We see that the 
set of all nonzero primitive submodules of a fixed $\mathcal{D}$ submodule $\Theta$ of $\mathcal{R}^m$, denoted by $\frk{P}(\Theta)$,
is a partially ordered set with respect to set inclusion and its length is equal to $\rank(\Theta)$. Note that for any submodule $\Delta'$ of $\Theta$, $\Delta \defeq \mathcal{R}\Delta'\cap \Theta$ is also a $\mathcal{B}$ submodule which is primitive in $\Theta$. Suppose that $\mathcal{R}$ has a valuation, denoted by $|\cdot|$, that makes $\mathcal{R}$ a local field. We consider the topological group
$\GL(m,\mathcal{R})$ of $m\times m$ invertible matrices with entires in $\mathcal{R}$. It is obvious that any $g\in \GL(m,\mathcal{R})$ maps 
$\mathcal{D}$ submodules of $\mathcal{R}^m$ to $\mathcal{D}$ submodules of $\mathcal{R}^m$ preserving their rank and
inclusion relation.\\

A function $\|\cdot \|: \bigwedge (\mathcal{R}^m)\longrightarrow \mathbb{R}_+$ is said to be \emph{submultiplicative} if it is continuous with respect to the natural topology of $\bigwedge (\mathcal{R}^m)$, homogeneous, i.e., $\|t\mathbf{w}\|=|t| \|\mathbf{w}\|, \,\forall \mathbf{w}\in \bigwedge (\mathcal{R}^m)$ and $t\in \mathcal{R}$, and for any $\mathbf{v}, \mathbf{w}\in \bigwedge (\mathcal{R}^m)$, one always has the inequality $\|\mathbf{v}\wedge \mathbf{w}\|\leq \|\mathbf{v}\|\cdot \|\mathbf{w}\|$. Now given a Besicovitch metric space $X$ equipped with a uniformly Federer measure $\mu$, a ball $B\subseteq X$, a  $\mathcal{D}$ submodule $\Theta$ of $\mathcal{R}^m$ having rank $k$, a continuous function $h: 3^kB \longrightarrow \GL_m(\mathcal{R})$ and a submultiplicative function $\|\cdot \|$, we consider  the function $\varphi: \frk{P}(\Theta)\longrightarrow C(3^k B)$ defined as follows: for $\Delta \in \frk{P}(\Theta)$, $\varphi_{\Delta}$ is the map 
$x\mapsto \|h(x)\Delta\|, \,\forall x\in 3^kB$. Clearly every such $\varphi_{\Delta}$ is a continuous function on $3^kB$. 
\begin{theorem}\label{qn_gen}
	Let $\mathcal{R}, k, m, \Theta, B, \|\cdot\|$ and $\varphi$ be as above. Assume that, $C, \alpha >0$ and $\rho\in (0,1]$ are constants such that one has the following:
	\begin{enumerate} [label=(\Alph*)]
		\item \label{good}$\forall \Delta \in \frk{P}(\Theta)$, $\varphi_{\Delta}$ is $(C, \alpha)$-good on $3^kB$, 
		\item \label{lower} $\forall \Delta \in \frk{P}(\Theta)$. $\|\varphi_{\Delta}\|_{\mu, B}\geq \rho$; and	\item \label{discrete} $\forall x\in 3^k B\cap \text{supp }\mu$, $\#\{\Delta\in \mathfrak{P}(\Theta):\|h(x)\Delta\|<\rho\}$ is finite. 
	\end{enumerate}Then for any positive $\varepsilon\leq \rho$, we have 
\[\mu(\{x\in B: \|h(x)\theta\|<\varepsilon, \text{ for some }\theta\in \Theta \setminus \{\mathbf{0}\}\})\leq kC(N_XD_{\mu}^2)^k \left(\frac{\varepsilon}{\rho}\right)^{\alpha}\mu(B).
\]
	\end{theorem}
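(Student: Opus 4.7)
The plan is to deduce Theorem~\ref{qn_gen} from Theorem~\ref{theorem_poset} applied to the poset $\mathfrak{P} := \mathfrak{P}(\Theta)$ with the map $\varphi\colon \Delta \mapsto \bigl(x \mapsto \|h(x)\Delta\|\bigr)$ defined just above the statement. Chains in $\mathfrak{P}(\Theta)$ contain at most $\mathrm{rank}(\Theta) = k$ elements, so hypothesis (a) of Theorem~\ref{theorem_poset} holds automatically, and \ref{good}, \ref{lower}, \ref{discrete} are (b), (c), (d) verbatim. Theorem~\ref{theorem_poset} therefore yields
\[
\mu\bigl(B \setminus \Phi(\varepsilon,\rho,\mathfrak{P}(\Theta))\bigr) \leq kC(N_XD_\mu^2)^k \left(\frac{\varepsilon}{\rho}\right)^{\alpha} \mu(B).
\]

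It thus remains to establish the inclusion
\[
\{x \in B : \|h(x)\theta\| < \varepsilon \text{ for some } \theta \in \Theta\setminus\{\mathbf{0}\}\} \;\subseteq\; B \setminus \Phi(\varepsilon,\rho,\mathfrak{P}(\Theta)).
\]
I would do this by showing that whenever $x$ is $(\varepsilon,\rho)$-protected by a chain $\mathfrak{S}_x$, one has $\|h(x)\Delta\| \geq \varepsilon$ for every primitive rank-one submodule $\Delta \subseteq \Theta$. This reduces the vector statement to a statement about submodules: any nonzero $\theta \in \Theta$ has the form $\theta = c\theta_0$, with $\theta_0$ a $\mathcal{D}$-generator of $\Delta_\theta := \mathcal{R}\theta \cap \Theta$ and $c \in \mathcal{D}\setminus\{0\}$; since nonzero elements of the intended PID $\mathcal{D}$ (in our application, $\Lambda$) have absolute value at least $1$, one gets $\|h(x)\theta\| = |c|\,\|h(x)\Delta_\theta\| \geq \|h(x)\Delta_\theta\|$.

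Fix such $x$, $\mathfrak{S}_x$, and a primitive rank-one $\Delta \subseteq \Theta$. If $\Delta \in \mathfrak{S}_x$, then~\ref{protected 1} gives $\|h(x)\Delta\| \geq \varepsilon$; if $\Delta \notin \mathfrak{S}_x$ but is comparable with every element of $\mathfrak{S}_x$, then~\ref{protected 2} gives $\|h(x)\Delta\| \geq \rho \geq \varepsilon$. Otherwise, pick $\Delta' \in \mathfrak{S}_x$ of \emph{maximal} rank among those with $\Delta \not\subseteq \Delta'$, and set $\Delta^* := \mathcal{R}(\Delta' + \Delta) \cap \Theta \in \mathfrak{P}(\Theta)$. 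Any strictly larger chain element contains $\Delta$ (by maximality of $\Delta'$) and hence $\Delta^*$; any strictly smaller chain element is contained in $\Delta' \subseteq \Delta^*$; so $\Delta^*$ is comparable with every element of $\mathfrak{S}_x$. Hence~\ref{protected 1} or~\ref{protected 2} yields $\|h(x)\Delta^*\| \geq \varepsilon$. Submultiplicativity of $\|\cdot\|$, applied to the wedge of a $\mathcal{D}$-basis of $\Delta'$ extended by a generator of $\Delta$ (this wedge represents $\Delta^*$ up to a $\mathcal{D}$-scalar of absolute value at least one), gives
\[
\|h(x)\Delta^*\| \;\leq\; \|h(x)\Delta'\|\cdot\|h(x)\Delta\|.
\]
Since $\|h(x)\Delta'\| \leq \rho \leq 1$ by~\ref{protected 1}, rearrangement produces $\|h(x)\Delta\| \geq \|h(x)\Delta^*\|/\|h(x)\Delta'\| \geq \varepsilon/\rho \geq \varepsilon$, as required.

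The main obstacle is the bookkeeping in the incomparable case: one must verify that $\Delta^*$ is genuinely comparable with the entire chain (resting on the maximal choice of $\Delta'$ together with the primitivity of members of $\mathfrak{S}_x$) and invoke the content estimate $|c| \geq 1$ for the $\mathcal{D}$-scalar relating the enlarged wedge to a $\mathcal{D}$-basis of $\Delta^*$, which is standard for primitive submodules over a PID. The overall structure mirrors the characteristic-zero arguments in \cite[Lemma 7.2]{KT} and \cite[Theorem 6.2]{BKM}; the only new ingredient is that the valuation on $\Lambda$ is nonnegative on nonzero elements, which makes the content estimates transfer verbatim to this setting.
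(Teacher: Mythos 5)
Your proposal is correct and follows essentially the same route as the paper: both deduce the result from Theorem \ref{theorem_poset} by showing that every $(\varepsilon,\rho)$-protected point $x$ satisfies $\|h(x)\theta\|\geq\varepsilon$ for all $\theta\in\Theta\setminus\{\mathbf{0}\}$, namely by saturating $\Delta'+\mathcal{R}\theta$ inside $\Theta$ to obtain a primitive submodule comparable with the whole chain and then combining submultiplicativity with \ref{protected 1}, \ref{protected 2} and $\rho\leq 1$. Your preliminary reduction to rank-one primitive submodules (with the explicit content estimate $|c|\geq 1$) and your choice of $\Delta'$ as the maximal chain element not containing $\Delta$ are only cosmetic variants of the paper's device of adjoining $\{\mathbf{0}\}$ and $\Theta$ to $\mathfrak{S}_x$ and locating $\theta$ between consecutive elements of the augmented chain.
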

\begin{proof}
	It is enough to show that, for every $0<\varepsilon\leq \rho$, and every $x\in B$ that is $(\varepsilon, \rho)$-protected relative to $\frk{P}(\Theta)$, one has the following:
	\[\|h(x)\theta\|\geq \varepsilon \text{ for all }\theta\in \Theta \setminus \{\mathbf{0}\}.\] Let $\{\mathbf{0}\}=\Delta_0\subsetneqq \Delta_1\subsetneqq \cdots \subsetneqq  \Delta_{\ell}=\Theta$ be all elements of $\frk{S}_x\cup \{\{\mathbf{0}\}, \Theta\}$. Pick any $\theta\in \Theta \setminus \{\mathbf{0}\}$. Then there exists a unique $i\in \{1,\dots,\ell\}$ such that $\theta\in \Delta_i \setminus \Delta_{i-1}$. Take $\Delta \defeq (\Delta_{i-1}+\mathcal{R}\theta)\cap \Theta$. It is clear from the construction that $\Delta$ is primitive in $\Theta$ and it is contained in $\Delta_i$. This makes $\Delta$ comparable with every element of $\frk{S}_x$. Now the submultiplicativity yields that $\|h(x)\Delta\|\leq \|h(x)\Delta_{i-1}\|\|h(x)\theta\|$. From \ref{protected 1} and \ref{protected 2}, it is obvious that $\|h(x)\Delta\|\geq \min (\varepsilon, \rho)=\varepsilon$. From this one concludes that 
	\[\|h(x)\theta\|\geq \frac{\|h(x)\Delta\|}{\|h(x)\Delta_{i-1}\|}\geq \frac{\varepsilon}{\rho}\geq \varepsilon.\] Now Theorem \ref{theorem_poset} applies and completes the proof of Theorem \ref{qn_gen}. 
\end{proof}
\section{Estimating  $\lambda(\mathcal{L}^{<}_t)$}\label{khinchine}We give an explicit estimate of $\lambda(\mathcal{L}^{<}_t)$, for sufficiently large $t$, in this section from which the convergence of $\displaystyle \sum_{t=0}^{\infty}\lambda(\mathcal{L}^{<}_t)$ will follow at once. The method adopted is similar to that of \cite{GG-quant, G-hyper}.\\

Let $\beta\in (0, 1/(n+1))\cap \mathbb{Q}$ whose choice will be specified later. By $\mathcal{R}$ we denote the finite extension of $F$ that contains $T^{\frac{1}{n+1}}$ and $T^{(\frac{1}{n+1}-\beta)}$. Such an extension always exists and indeed the normalized valuation of $F$ can be uniquely extended to a normalized valuation in $\mathcal{R}$, which we again denote by the same notation $|\cdot|$. For $t\in \mathbb{N}\cup \{0\}$, set $\delta'\defeq = \frac{1}{T^{nt}}$, 
\[\varepsilon'\defeq (\delta'T^{(t+1)(n-1)})^\frac{1}{n+1}=\frac{T^{\frac{n-1}{n+1}}}{{T^{\frac{t}{n+1}}}}, \text{and } \varepsilon\defeq T^{\beta t}\varepsilon'. \]  From the convergence of  $\displaystyle \sum_{t=0}^{\infty} \psi(q^t)q^{nt}<\infty$, it follows that $\displaystyle \lim_{t\rightarrow \infty} q^{nt}\psi(q^t)=0$, and hence $\psi(q^t)<\frac{1}{q^{nt}}$ for all $t\gg 1$. In view of this, for all but finitely many $t\in \mathbb{N}$, one can clearly see  that the following holds 
\[\mathbf{x}\in \lambda(\mathcal{L}^{<}_t) \Longrightarrow \|g_tu_{\mathbf{x}}\theta\|<|\varepsilon|, \text{ for some }\theta\in \Theta \setminus \{\mathbf{0}\}, \] where $g_t\defeq \diag (\frac{\varepsilon}{\delta'}, \varepsilon, \cdots, \varepsilon, \frac{\varepsilon}{T^{t+1}}, \cdots, \frac{\varepsilon}{T^{t+1}})$,  $u_{\mathbf{x}}$ is the following  $2n\times 2n$ matrix over $F$:
\begin{equation}\label{u_x}\left( \begin{matrix}1 & \mathbf{0} & \mathbf{x} &\tilde{\mathbf{x}}\cdot \mathbf{a} \\ \mathbf{0} &  I_{n-1} & I_{n-1} & \mathbf{a}^t\\  \mathbf{0} & \mathbf{0} & I_{n-1} & \mathbf{0}\\ 0 & \mathbf{0} & \mathbf{0} & 1 \end{matrix}
\right),\end{equation} and \begin{equation}\label{theta}\Theta \defeq \left\{\left(\begin{array}{rcl}p\\ 0\\\cdot \\ \cdot \\ \cdot\\ 0 \\ \mathbf{q}\end{array}\right): p\in \Lambda, \mathbf{q}\in \Lambda^n\right\}.\end{equation} 

We exploit Theorem \ref{qn_gen} to estimate$$\lambda \left(\left \{\mathbf{x}\in U: \|g_tu_{\mathbf{x}}\theta\|<|\varepsilon|, \text{ for some }\theta\in \Theta \setminus \{\mathbf{0}\}\right \}\right).$$  

It is customary to denote the standard basis vectors of $\mathcal{R}^{2n}$ by $$\mathbf{e}_0, \mathbf{e}_{\ast 1}, \dots, \mathbf{e}_{\ast n-1}, \mathbf{e}_1, \dots, \mathbf{e}_n.$$ Let $\mathcal{S}$ be the collection of all $I\subseteq \{0, \ast1, \dots, \ast n-1, 1, \dots, n\}$ such that $\# I\cap \{\ast1, \dots, \ast n-1 \}\leq 1$ and the complement of $\mathcal{S}$ is denoted by $\mathcal{S}'$. Clearly, one has the following direct sum decomposition of $\bigwedge(\mathcal{R}^{2n})$: $\text{span}\{\mathbf{e}_I: I\in \mathcal{S}\}\oplus  \text{span} \{\mathbf{e}_I: I\in \mathcal{S}'\}$. Suppose $\pi$ stands for the projection operator to the first summand with respect to this decomposition. Define $\|\cdot\|: \bigwedge(\mathcal{R}^{2n}) \longrightarrow \mathbb{R}_+$ as follows: for any $\mathbf{v}\in \bigwedge(\mathcal{R}^{2n})$, $\|\mathbf{v}\|$ is defined as the supremum norm of $\pi(\mathbf{v})$. In simple words, if $\mathbf{v}$ is written as a sum of exterior products of basis vectors $\mathbf{e}_i$ and $\mathbf{e}_{\ast i}$, to compute $\|\mathbf{v}\|$ we ignore the components attached with exterior powers that contains some $\mathbf{e}_{\ast i}\wedge \mathbf{e}_{\ast j}$, for some $i\neq j$, and take the supremum norm of the rest. From the construction, the continuity and homogeneity property of $\|\cdot \|$ is clear. It is a routine verification, and hence left to the reader, that for any $\mathbf{v}, \mathbf{w}\in \bigwedge (\mathcal{R}^m)$, one always has the inequality $\|\mathbf{v}\wedge \mathbf{w}\|\leq \|\mathbf{v}\|\cdot \|\mathbf{w}\|$. Thus $\|\cdot \|$ defined as above is submultiplicative. \\

For $t\in\mathbb{N}$, consider $h(x)=g_tu_{\mathbf{x}}\in \GL_{2n}(\mathcal{R})$, where $u_{\mathbf{x}}$ is the matrix defined in \eqref{u_x}. Let $\mathcal{D}\defeq \Lambda$ and $\Theta$ be the $\Lambda$ submodule defined in \eqref{theta}. The discreteness of $\bigwedge^j(\Theta)$ in $\bigwedge^j(\mathcal{R}^{2n})$, for all $j$, ascertains \ref{discrete} of Theorem \ref{qn_gen}. To verify the other two conditions, i.e., \ref{good} and \ref{lower}, we first look at the actions of  $u_{\mathbf{x}}$ on the standard basis vectors:
	\begin{enumerate}  [label=(\roman*)]
		\item $u_{\mathbf{x}}\mathbf{e}_0=\mathbf{e}_0$,
		\item $u_{\mathbf{x}}\mathbf{e}_{\ast i}=\mathbf{e}_{\ast i}$, for all $i=1, \dots, n-1$,
		\item $u_{\mathbf{x}}\mathbf{e}_i=x_i\mathbf{e}_0$+ $\mathbf{e}_{\ast i}+\mathbf{e}_i$, for $1\leq i\leq n-1$; and 
		\item $u_{\mathbf{x}}\mathbf{e}_n=(\alpha_0+\alpha_1x_1+\dots+\alpha_{n-1}x_{n-1})\mathbf{e}_0+\alpha_1 \mathbf{e}_{\ast 1}+ \cdots + \alpha_{n-1}\mathbf{e}_{\ast n-1}+\mathbf{e}_n$.
	\end{enumerate}
It is evident that, for all $\mathbf{w}\in \bigwedge (\Theta)$, each component of $g_tu_{\mathbf{x}}\mathbf{w}$ is a linear polynomial over $\mathcal{R}$. It follows from Proposition \ref{good prop} that all of them are $(C,1/n-1)$-good everywhere, where $C$ is a constant depending only on $n$. Property \ref{good} is now clear from this. So we dedicate ourselves to establish \ref{lower}.\\

In what follows we denote the subspace $\text{span}\{\mathbf{e}_0, \mathbf{e}_1, \dots, \mathbf{e}_n\}$ of $\mathcal{R}^{2n}$ by $\mathcal{R}^{n+1}$. It is obvious that $\Theta$ is discrete in $\mathcal{R}^{n+1}$. Pick $\mathbf{w}=w\,\mathbf{e}_0\wedge \mathbf{e}_1\wedge \cdots \wedge \mathbf{e}_n\in \bigwedge^{n+1}(\Theta)$, where $w\in \mathbb{Z}\setminus \{\mathbf{0}\}$. For any $\mathbf{x}\in U$, the coefficient of $\mathbf{e}_0\wedge \mathbf{e}_{\ast 1}\wedge\mathbf{e}_2\wedge  \cdots \wedge \mathbf{e}_n$ in $\pi(g_tu_{\mathbf{x}}\mathbf{w})$ is easily seen to be $w\,\frac{\varepsilon^{n+1}}{\delta'}\times  \frac{1}{T^{(t+1)(n-1)}}=w\, q^{\beta(n+1)t)}\frac{\varepsilon'^{n+1}}{\delta'T^{(t+1)(n-1)}}=w\, q^{\beta(n+1)t)}$. Therefore \begin{equation}\label{estimate top}\displaystyle \sup_{\mathbf{x}\in U}\|\pi(g_tu_{\mathbf{x}}\mathbf{w})\|\geq |w\, q^{\beta(n+1)t)}|\geq \frac{1}{2}, \forall \mathbf{w}\in \bigwedge^{n+1}(\Theta). \end{equation} \\

 Assume now that $2\leq \ell\leq n$ and $\mathbf{w}\in \bigwedge^{\ell}(\Theta)$. In order to find a lower bound for $\displaystyle \sup_{\mathbf{x}\in U}\|\pi(g_tu_{\mathbf{x}}\mathbf{w})\|$, we proceed as follows. Observe further that,  $\text{span}\{\mathbf{e}_I: I\in \mathcal{S}\}$ can be further decomposed to the direct sum $$\bigwedge (\mathcal{R}^{n+1})\bigoplus \text{span}\{\mathbf{e}_I: I\in \mathcal{S} \text{ and, for some }i, \ast i\in I\}.$$ By $\pi_{\ast}$, we denote the projection operator to the summand $\bigwedge (\mathcal{R}^{n+1})$. It is immediate that, for any $\mathbf{v}\in \bigwedge (\mathcal{R}^{2n})$, $\|\pi(\mathbf{v})\|\geq \|\pi_{\ast}(\mathbf{v})\|$. This inspires us to find a lower bound of $\displaystyle \sup_{\mathbf{x}\in U}\|\pi_{\ast}(g_tu_{\mathbf{x}}\mathbf{w})\|=\sup_{\mathbf{x}\in U}\|\tilde{g}_t\tilde{u_{\mathbf{x}}}\mathbf{w}\|$ instead, where $\tilde{g}_t\defeq  \diag (\frac{\varepsilon}{\delta'}, \frac{\varepsilon}{T^{t+1}}, \dots, \frac{\varepsilon}{T^{t+1}})$, $t\in \mathbb{N}\cup \{0\}$, and $\tilde{u}_{\mathbf{x}}$ is the following  $(n+1)\times (n+1)$ matrix over $F$:
 \[\left( \begin{matrix}1  & \mathbf{x} &\tilde{\mathbf{x}}\cdot \mathbf{a} \\  \mathbf{0}& I_{n-1} & \mathbf{0}\\ 0 & \mathbf{0} & 1 \end{matrix}
 \right).\] Define the functions, for all $1\leq i \leq n-1$, $f_i(\mathbf{x})=x_i$, and $f_n(\mathbf{x})= \tilde{\mathbf{x}}\cdot \mathbf{a}$. Write $\mathbf{f}=(f_1, \dots,f_n)$. Then $\tilde{u}_{\mathbf{x}}$ can be rewritten as $\left(\begin{matrix}
 1& \mathbf{f}(\mathbf{x})\\\mathbf{0} & I_n
 \end{matrix}\right)$. Furthermore, the actions of $\tilde{u}_{\mathbf{x}}$ on the basis vectors of $F^{n+1}$ are as follows:
\begin{enumerate} [label=(\roman*)]
	\item $\tilde{u}_{\mathbf{x}}\mathbf{e}_0=\mathbf{e}_0$, and 
	\item $\tilde{u}_{\mathbf{x}}\mathbf{e}_i=f_i(\mathbf{x})\mathbf{e}_0+\mathbf{e}_i$, for $i=1,\dots,n$. 
	\end{enumerate} From this one obtains that, for any $\mathbf{e}_I\in \bigwedge^j(F^{n+1})$, where $j=0,\dots, n+1$, 
\begin{enumerate} [label=(\alph*)]
	\item $\tilde{u}_{\mathbf{x}}\mathbf{e}_I=\mathbf{e}_I$ if $0\in I$, and 
	\item $\tilde{u}_{\mathbf{x}}\mathbf{e}_I=\mathbf{e}_I+\sum_{i\in I}\pm f_i(\mathbf{x})\mathbf{e}_{I\cup \{0\}\setminus\{i\}}$ otherwise.\end{enumerate} 
	If now $\mathbf{w}=\sum_I w_I\mathbf{e}_I$, then we see that
	\begin{equation}\label{calculation}
\displaystyle \tilde{u}_{\mathbf{x}}\mathbf{w}= \sum_{0\notin I} w_I\mathbf{e}_I+\sum_{0\in I}\left(w_I+\sum_{i\notin I}\pm w_{I\cup\{i\}\setminus\{0\}}f_i(\mathbf{x})\right)\mathbf{e}_I.
	\end{equation}
In \eqref{calculation}, for any $I\ni 0$, it will be useful to put the coefficient of $\mathbf{e}_I$ as $\tilde{\mathbf{f}}(\mathbf{x})\cdot\mathbf{c}_{I,\mathbf{w}}$, where $\tilde{\mathbf{f}}=(1,\mathbf{f})$ and $\mathbf{c}_{I,\mathbf{w}}$ is the following vector:
\[\displaystyle \sum_{i\notin I\setminus \{0\}}\pm w_{I\cup\{i\}\setminus\{0\}} \mathbf{e}_i.  \]Observe that, for any $\mathbf{x}$, \[\tilde{\mathbf{f}}(\mathbf{x})=\tilde{\mathbf{x}}P,\] where $P$ is the block matrix $[I_n|\mathbf{a}^t]$. Thus, the coefficient of $\mathbf{e}_I$ in \eqref{calculation}, for any $I\ni 0$, is rewritten as $\tilde{\mathbf{x}}P\mathbf{c}_{I,\mathbf{w}}$. From the linear independence of $1, f_1, \dots f_{n-1}$ over $F$ on $U$, it follows that the map $\mathbf{v}\mapsto \sup_{\mathbf{x}\in U}\|\tilde{\mathbf{x}}\mathbf{v}\|$ indeed defines a norm on $F^n$, which must be equivalent to the supremum norm on $F^n$, whence for a constant $C'>0$ depending only upon $n$ and $U$, we have $\sup_{\mathbf{x}\in U}\|\tilde{\mathbf{x}}P\mathbf{c}_{I,\mathbf{w}}\|\geq C' \|P\mathbf{c}_{I,\mathbf{w}}\|$, for all $I$. Consequently, one obtains that 
\begin{equation}\label{estimate 1}\sup_{\mathbf{x}\in U}\|\tilde{g}_t\tilde{u}_{\mathbf{x}}\mathbf{w}\|\geq \max \left\{\left(\frac{|\varepsilon|}{q^{t+1}}\right)^{\ell}\max_{0\notin I}|w_I|, C'\frac{|\varepsilon|^{\ell}}{|\delta'|q^{(t+1)(\ell-1)}}\max_{0\in I}\|P\mathbf{c}_{I,\mathbf{w}}\|\right\},
\end{equation} in view of \eqref{calculation}. At this point we need the following lemma:
\begin{lemma}\label{enough} Let $2\leq \ell\leq n+1$ and $\mathbf{w}\in \bigwedge^{\ell} (\Theta)$. Then $\displaystyle \max_{0\in I}\|P\mathbf{c}_{I,\mathbf{w}}\|\geq 1$. 
	\end{lemma}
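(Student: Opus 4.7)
\emph{Strategy.} My plan is to exploit the fact that, for every $I \ni 0$, all coordinates $c_0,\ldots,c_n$ of $\mathbf{c}_{I,\mathbf{w}}$ lie in $\Lambda$, so a single nonzero coordinate automatically has absolute value $\geq 1$. The difficulty is that $P = [I_n\,|\,\mathbf{a}^t]$ mixes $c_n$ into every other coordinate: $(P\mathbf{c}_{I,\mathbf{w}})_k = c_k + \alpha_k c_n$ for $k = 0,\ldots,n-1$, and this cross-term can cause unwanted cancellation. I will sidestep this by choosing $I$ so that $c_n = 0$ outright; then $(P\mathbf{c}_{I,\mathbf{w}})_k = c_k$ and it suffices to exhibit some $c_k \neq 0$ with $k \in \{0,\ldots,n-1\}$.

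\emph{Key observation and case split.} For $I \ni 0$, unpacking the definition yields $c_0 = w_I$, $c_i = 0$ for $i \in I \setminus \{0\}$, and $c_i = \pm w_{(I\setminus\{0\})\cup\{i\}}$ for $i \notin I$. Hence $c_n$ vanishes either when $n \in I$, or, more subtly, when $w_{(I\setminus\{0\})\cup\{n\}} = 0$ in $\mathbf{w}$. I would then split into two exhaustive cases on $\mathbf{w} \neq 0$. \emph{Case A:} some $\ell$-subset $J$ with $n \in J$ satisfies $w_J \neq 0$. If $0 \in J$, take $I = J$: then $n \in I$ gives $c_n = 0$ and $c_0 = w_J \neq 0$. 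If $0 \notin J$, pick any $k \in J \setminus \{n\}$ (which exists because $|J| = \ell \geq 2$) and take $I = (J\setminus\{k\}) \cup \{0\}$: then $n \in I$ again forces $c_n = 0$, while $c_k = \pm w_J \neq 0$ with $k \in \{1,\ldots,n-1\}$. \emph{Case B:} every nonzero $w_J$ has $n \notin J$. Choose $J_0$ with $w_{J_0} \neq 0$, and set $I = J_0$ if $0 \in J_0$, or $I = (J_0\setminus\{k\})\cup\{0\}$ for some $k \in J_0$ otherwise. In both subcases $n \notin I$, but $(I\setminus\{0\})\cup\{n\}$ contains $n$, so by the Case B hypothesis $w_{(I\setminus\{0\})\cup\{n\}} = 0$ and thus $c_n = 0$; meanwhile one of $c_0, c_k$ equals $\pm w_{J_0} \neq 0$.

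\emph{Conclusion and main obstacle.} In every branch $c_n = 0$ and some $c_k$ with $k \in \{0,\ldots,n-1\}$ is a nonzero element of $\Lambda = \mathbb{F}_q[T]$, giving $\|P\mathbf{c}_{I,\mathbf{w}}\| \geq |c_k| = q^{\deg c_k} \geq 1$. The main hurdle I anticipate is precisely the cross-term $\alpha_k c_n$: without a tailored choice of $I$, one would seemingly need a Diophantine non-cancellation statement for $c_k + \alpha_k c_n$ as $c_n$ grows, but the hypothesis \eqref{cond} only yields $|c_k + \alpha_k c_n| \gtrsim |c_n|^{-(n-\delta)}$, which is far too weak to reach $1$. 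The structural trick of forcing $c_n = 0$ avoids this entirely; what remains is bookkeeping over $\ell$-subsets of $\{0,1,\ldots,n\}$, together with the elementary fact that a nonzero polynomial in $\mathbb{F}_q[T]$ has absolute value at least $1$.
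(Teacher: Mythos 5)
Your proof is correct, and it is worth pointing out that the paper itself never writes out an argument for Lemma \ref{enough}: it only asserts that the Euclidean proof of \cite[Lemma 4.6]{Kleinbock-extremal} adapts verbatim and leaves the adaptation to the reader. So your proposal supplies precisely the omitted details, and it is in the same general spirit as the Euclidean argument: exploit the fact that the Pl\"ucker coefficients $w_J$ of $\mathbf{w}\in\bigwedge^{\ell}(\Theta)$ lie in $\Lambda$ (legitimate, since $\Theta$ is the free $\Lambda$-module spanned by $\mathbf{e}_0,\mathbf{e}_1,\dots,\mathbf{e}_n$, so a nonzero coefficient has absolute value at least $1$), together with the identity block of $P=[\,I_n\,|\,\mathbf{a}^t\,]$, choosing $I\ni 0$ so that the $\alpha$-column cannot cause cancellation. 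Your specific device --- forcing the coordinate $c_n$ of $\mathbf{c}_{I,\mathbf{w}}$ to vanish, either because $n\in I\setminus\{0\}$ or because all coefficients $w_J$ with $n\in J$ vanish --- is a clean way to organize this, and I checked all four branches: in each case you produce an $\ell$-element set $I\ni 0$ with $c_n=0$ and some $c_k\neq 0$ for $0\le k\le n-1$, so that $(P\mathbf{c}_{I,\mathbf{w}})_k=c_k\in\Lambda\setminus\{0\}$ and hence $\|P\mathbf{c}_{I,\mathbf{w}}\|\ge 1$; the case split on whether some nonzero $w_J$ contains $n$ is exhaustive, and the requirement $\ell\ge 2$ is used exactly where you invoke it. Two small points to make explicit in a final write-up: the hypothesis $\mathbf{w}\neq\mathbf{0}$, which the statement of the lemma omits but obviously intends (and which you assume), and the remark that nonzero elements of $\Lambda=\mathbb{F}_q[T]$ have absolute value $\ge 1$, which you already state. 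You are also right that the Diophantine condition \eqref{cond} is irrelevant here; in the paper it enters only in the rank-one estimate \eqref{estimate 3}--\eqref{estimate 4}, which is handled separately from this lemma.
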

\begin{proof}This lemma can be proved by adopting the argument of its euclidean version \cite[Lemma 4.6]{Kleinbock-extremal} verbatim. Hence we leave the proof for the pursual of the interested readers. 
\end{proof}From \eqref{estimate 1}, using Lemma \ref{enough}, we deduce with an appropriate choice of $\beta\in (0, \frac{1}{n+1}) \cap \mathbb{Q}$ that,
\begin{equation}\label{estimate middle} \begin{array}{rcl} \sup_{\mathbf{x}\in U}\|g_tu_{\mathbf{x}}\mathbf{w}\|\geq \sup_{\mathbf{x}\in U}\|\tilde{g}_t\tilde{u}_{\mathbf{x}}\mathbf{w}\| \geq & C'\frac{|\varepsilon|^{\ell}}{|\delta'|q^{(t+1)(\ell-1)}}\\ &  =C'\frac{q^{\left(\frac{n-1}{n+1}\right)\ell}}{{q^{\left(\frac{1}{n+1}-\beta\right)t\ell }}}\times q^{nt}\times \frac{1}{q^{(t+1)(\ell-1)}}\\ & \geq C'\frac{q^{2\left(\frac{n-1}{n+1}\right)}}{{q^{\left(\frac{1}{n+1}-\beta\right)nt }}}\times q^{nt}\times \frac{1}{q^{(t+1)(n-1)}}\\ & \geq C'\frac{q^{2\left(\frac{n-1}{n+1}\right)}}{q^{n-1}}\times q^{\left(1-\left(\frac{1}{n+1}-\beta\right)n\right)t } \\ &\geq C'\frac{q^{2\left(\frac{n-1}{n+1}\right)}}{q^{n-1}},\end{array}\end{equation} for all  $\mathbf{w} \in \bigwedge ^{\ell} (\Theta) \text{ and }2\leq \ell\leq n$.\\

Now let $\mathbf{0}\neq \mathbf{w}=p_0\mathbf{e}_0+q_1\mathbf{e_1}+\dots+q_{n-1}\mathbf{e}_{n-1}+q_n\mathbf{e}_n\in \bigwedge^1(\Theta)$. So, for any $\mathbf{x}\in U$, 
\begin{eqnarray*}g_tu_{\mathbf{x}}\mathbf{w} & =\frac{\varepsilon}{\delta'}((p_0+q_n\alpha_0)+ (q_1+q_n\alpha_1)x_1+\cdots+(q_{n-1}+q_n\alpha_{n-1})x_{n-1})\mathbf{e}_0\\&+ \varepsilon(q_1+q_n\alpha_1)\mathbf{e}_{\ast 1}+\cdots  + \varepsilon(q_{n-1}+q_n\alpha_{n-1})\mathbf{e}_{\ast n-1}\\ & + \frac{\varepsilon}{T^{t+1}}q_1\mathbf{e}_1+\cdots \frac{\varepsilon}{T^{t+1}}q_n\mathbf{e}_n.\end{eqnarray*} The Diophantine condition given by \eqref{cond} ensures that 
\[\sup_{\mathbf{x}\in U}|(p_0+q_n\alpha_0)+ (q_1+q_n\alpha_1)x_1+\cdots+(q_{n-1}+q_n\alpha_{n-1})x_{n-1}|\geq C''\frac{1}{|q_n|^{n-\delta}},\] for some constant $C''>0$ depending on $U$. From this, we see that 
\begin{equation}\label{estimate 3}
\sup_{\mathbf{x}\in U}\|g_tu_{\mathbf{x}}\mathbf{w}\|\geq\max\left\{C''\frac{|\varepsilon|}{|\delta'|}\frac{1}{|q_n|^{n-\delta}},\frac{|\varepsilon|}{q^{t+1}}|q_n |\right\}.
\end{equation}
Hence we have to seek the solution of the equation $C''\frac{|\varepsilon|}{|\delta'|}\frac{1}{y^{n-\delta}}=\frac{|\varepsilon|}{q^{t+1}}y $. The unique positive solution of this equation is $y_0=C''^{\frac{1}{n-\delta+1}}\left(\frac{q^{t+1}}{|\delta'|}\right)^{\frac{1}{n-\delta+1}}=C''^{\frac{1}{n-\delta+1}}q^{\frac{1}{n-\delta+1}}q^{\frac{(n+1)t}{n-\delta+1}}$. This yields that, with a further refined choice of $\beta$ if necessary,  
\begin{equation}\label{estimate 4}\begin{array}{rcl}\displaystyle \sup_{\mathbf{x}\in U}\|g_tu_{\mathbf{x}}\mathbf{w}\|&\geq C''^{\frac{1}{n-\delta+1}}q^{\frac{1}{n-\delta+1}}q^{\frac{(n+1)t}{n-\delta+1}}\times \frac{1}{q^{t+1}}\times\frac{q^{\frac{n-1}{n+1}}}{q^{\left(\frac{1}{n+1}-\beta\right)t }}\\ & \geq C''^{\frac{1}{n-\delta+1}}q^{\left(\frac{1}{n-\delta+1}-1\right)}q^{\left(\left(\frac{n+1}{n+1-\delta}-1\right)-\left(\frac{1}{n+1}-\beta\right)\right)t}\\& \geq C''^{\frac{1}{n-\delta+1}}q^{\left(\frac{1}{n-\delta+1}-1\right)},
\end{array}
\end{equation}  for all $\mathbf{0}\neq \mathbf{w}\in \bigwedge^1(\Theta)$.
Summarizing the observations \eqref{estimate top}, \eqref{estimate middle} and \eqref{estimate 4}, we ascertain \ref{lower} with $\mu=\lambda$ and $\rho$ equal to the following explicit constant:
\begin{equation}\label{rho}\min \left\{\frac{1}{2}, C'\frac{q^{2\left(\frac{n-1}{n+1}\right)}}{q^{n-1}},  C''^{\frac{1}{n-\delta+1}}q^{\left(\frac{1}{n-\delta+1}-1\right)}\right\}.\end{equation}
Now the Theorem \ref{qn_gen} applies and provides us with the following estimate:
\begin{eqnarray*}\lambda \left(\left \{\mathbf{x}\in U: \|g_tu_{\mathbf{x}}\theta\|<|\varepsilon|, \text{ for some }\theta\in \Theta \setminus \{\mathbf{0}\}\right \}\right)\leq &  (n+1)C(N_{F^{n-1}}D_{\lambda}^2)^{n+1} \left(\frac{|\varepsilon|}{\rho}\right)^{\frac{1}{n-1}}\lambda(U) \\ \leq &  (n+1)C(N_{F^{n-1}}D_{\lambda}^2)^{n+1} \frac{q^{\frac{1}{n+1}}}{\rho^{\frac{1}{n-1}}}\frac{1}{q^{\left(\frac{\frac{1}{n+1}-\beta}{n-1}\right)t }}\lambda(U);
	\end{eqnarray*} which in turn implies the desired convergence of $ \displaystyle \sum_{t=0}^{\infty}\lambda(\mathcal{L}^{<}_t)$.\\

We now turn to the quantitative counterpart of the above. Exactly same way to that of $\lambda(\mathcal{L}^{<}_t)$, we shall provide an estimate of $\lambda(\mathcal{L}^{<}_t(\kappa))$, for $t\geq 0$, in the following section. 
\section{Proof of \eqref{quant small}} Let $\beta, \mathcal{R}, u_{\mathbf{x}}$,  where $\mathbf{x}\in U$, and $\Theta$ be as given in \S \ref{khinchine}. To avoid using too many different notations, we retain some of those from \S \ref{khinchine} in what follows, though the expressions that they stand for from now on are going to be different from the previous ones   in many cases. Without any loss in generality we can consider $\kappa=\frac{1}{q^r}$, for $r\in \mathbb{N}$. Now we set the following for all $t\in \mathbb{N}\cup\{0\}$: $\delta'\defeq \frac{1}{T^{nt+r}}$ so that $|\delta'|=\frac{\kappa}{q^{nt}}$, and 
\[\varepsilon'\defeq \left(\delta'T^{(t+1)(n-1)}\right)^\frac{1}{n+1}=\kappa^{\frac{1}{n+1}}\frac{T^{\frac{n-1}{n+1}}}{{T^{\frac{t}{n+1}}}} \text{ and }\varepsilon\defeq T^{\beta t}\varepsilon'. \]
  Define $g_t\defeq \diag (\frac{\varepsilon}{\delta'}, \varepsilon, \cdots, \varepsilon, \frac{\varepsilon}{T^{t+1}}, \cdots, \frac{\varepsilon}{T^{t+1}})\in \GL_{2n}(\mathcal{R})$. Similar to that of $\mathcal{L}_t^{<}$, one can immediately see that, for all $t\geq 0$, 
 \begin{equation}\mathcal{L}^{<}_t(\kappa) \subseteq \left\{\mathbf{x}\in U: \|g_tu_{\mathbf{x}}\theta\|<|\varepsilon|, \text{ for some }\theta\in \Theta \setminus \{\mathbf{0}\}\right\}. \end{equation} As usual, we use Theorem \ref{qn_gen} to estimate the measure of set appearing in the RHS of \eqref{set}. With same analysis to that of \S\ref{khinchine}, we note the following:\vspace{0.1cm}
 \begin{enumerate}
 	\item \label{quant-good}For all $\mathbf{w}\in \bigwedge (\Theta)$, each component of $g_tu_{\mathbf{x}}\mathbf{w}$ is a linear polynomial over $\mathcal{R}$, and hence by proposition \ref{good prop}, all of them are $(C,1/n-1)$-good everywhere, where $C$ is a constant depending only on $n$.
 	\item \label{quant top}$\displaystyle \sup_{\mathbf{x}\in U}\|\pi(g_tu_{\mathbf{x}}\mathbf{w})\|\geq |w\, q^{\beta(n+1)t)}|\geq \frac{1}{2}, \forall \mathbf{w}\in \bigwedge^{n+1}(\Theta)$.
 	\item Let  $2\leq \ell\leq n$. For all  $\mathbf{w} \in \bigwedge ^{\ell} (\Theta)$, 
 	 $\sup_{\mathbf{x}\in U}\|g_tu_{\mathbf{x}}\mathbf{w}\|$ is at least 
 	\[\begin{array}{rcl} C'\frac{|\varepsilon|^{\ell}}{|\delta'|q^{(t+1)(\ell-1)}}& =C'\kappa^{\frac{\ell}{n+1}}\frac{q^{\left(\frac{n-1}{n+1}\right)\ell}}{{q^{\left(\frac{1}{n+1}-\beta\right)t\ell }}}\times \frac{q^{nt}}{\kappa}\times \frac{1}{q^{(t+1)(\ell-1)}}\\ & \geq C'\kappa^{\left(\frac{\ell}{n+1}-1\right)}\frac{q^{2\left(\frac{n-1}{n+1}\right)}}{{q^{\left(\frac{1}{n+1}-\beta\right)nt }}}\times q^{nt}\times \frac{1}{q^{(t+1)(n-1)}}\\ &\geq C'\frac{q^{2\left(\frac{n-1}{n+1}\right)}}{q^{n-1}}\times q^{\left(1-\left(\frac{1}{n+1}-\beta\right)n\right)t } \\ & \geq C'\frac{q^{2\left(\frac{n-1}{n+1}\right)}}{q^{n-1}}.\end{array}\]
 	\item \label{quant one}For all $\mathbf{0}\neq \mathbf{w}\in \bigwedge^1(\Theta)$, 
 	\[\begin{array}{rcl}\displaystyle \sup_{\mathbf{x}\in U}\|g_tu_{\mathbf{x}}\mathbf{w}\|&\geq C''^{\frac{1}{n-\delta+1}}\left(\frac{q}{\kappa}\right)^{\frac{1}{n-\delta+1}}q^{\frac{(n+1)t}{n-\delta+1}}\times \frac{1}{q^{t+1}}\times \kappa^{\frac{1}{n+1}}\frac{q^{\frac{n-1}{n+1}}}{q^{\left(\frac{1}{n+1}-\beta\right)t }}\\ & \geq C''^{\frac{1}{n-\delta+1}}q^{\left(\frac{1}{n-\delta+1}-1\right)}\kappa^{
 	\frac{-\delta}{(n+1)(n+1-\delta)}}q^{\left(\left(\frac{n+1}{n+1-\delta}-1\right)-\left(\frac{1}{n+1}-\beta\right)\right)t}\\& \geq C''^{\frac{1}{n-\delta+1}}q^{\left(\frac{1}{n-\delta+1}-1\right)},
 	\end{array}\]
 	\end{enumerate}	The properties \ref{good} and \ref{lower} in Theorem \ref{qn_gen} now follow immediately from \eqref{quant-good} and \eqref{quant top}-\eqref{quant one} respectively; indeed the choice of $\rho$ that we have made in \eqref{rho} works here too. The Property \ref{discrete} is anyway clear from the discreteness of $\bigwedge^j(\Theta)$ in $\bigwedge^j(\mathcal{R}^{2n})$, for all $j$. Hence we obtain that 
 \begin{eqnarray*}\lambda \left(\left \{\mathbf{x}\in U: \|g_tu_{\mathbf{x}}\theta\|<|\varepsilon|, \text{ for some }\theta\in \Theta \setminus \{\mathbf{0}\}\right \}\right)\leq &  (n+1)C(N_{F^{n-1}}D_{\lambda}^2)^{n+1} \left(\frac{|\varepsilon|}{\rho}\right)^{\frac{1}{n-1}}\lambda(U) \\ \leq &  (n+1)C(N_{F^{n-1}}D_{\lambda}^2)^{n+1} \frac{q^{\frac{1}{n+1}}}{\rho^{\frac{1}{n-1}}}\kappa^{\frac{1}{n^2-1}}\frac{1}{q^{\left(\frac{\frac{1}{n+1}-\beta}{n-1}\right)t }}\lambda(U).
 \end{eqnarray*} 
Denote $K_0\defeq \displaystyle (n+1)C(N_{F^{n-1}}D_{\lambda}^2)^{n+1} \frac{q^{\frac{1}{n+1}}}{\rho^{\frac{1}{n-1}}}$ and $K_1\defeq \displaystyle \sum_{t=0}^{\infty}\frac{1}{q^{\left(\frac{\frac{1}{n+1}-\beta}{n-1}\right)t }}$. Thus one has 
\[\displaystyle \sum_{t=0}^{\infty}\lambda(\mathcal{L}_t^{<})\leq K_0K_1\kappa^{\frac{1}{n^2-1}}\lambda(U)<\frac{\xi}{2}\lambda(U).\] The proof of \eqref{quant small} is now complete. $\Box$

\end{document}